\newcommand{\cu}{{\mathcal U}}
\newcommand{\cb}{{\mathcal B}}
\begin{document}

\title*{Invariant, super and quasi-martingale functions of a Markov process}
\author{Lucian Beznea and Iulian C\^impean}
\institute{Lucian Beznea \at Simion Stoilow Institute of Mathematics  of the Romanian Academy,
Research unit No. 2, P.O. Box  1-764, RO-014700 Bucharest, Romania,
University of Bucharest, Faculty of Mathematics and Computer Science, and  Centre Francophone en Math\'ematique de Bucarest,\\
\email{lucian.beznea@imar.ro}
\and Iulian C\^impean \at Simion Stoilow Institute of Mathematics  of the Romanian Academy,
Research unit No. 2, P.O. Box  1-764, RO-014700 Bucharest, Romania, \email{iulian.cimpean@imar.ro}}

\maketitle

\vspace*{-32mm}

\noindent
{\it Dedicated to Michael R\"ockner  on the occasion of his sixtieth   birthday}\\[4mm]

\abstract*{Each chapter should be preceded by an abstract (10--15 lines long) that summarizes the content. 
The abstract will appear \textit{online} at \url{www.SpringerLink.com} and be available with unrestricted access. 
This allows unregistered users to read the abstract as a teaser for the complete chapter. 
As a general rule the abstracts will not appear in the printed version of your book unless it is the style of 
your particular book or that of the series to which your book belongs.
Please use the 'starred' version of the new Springer \texttt{abstract} command for typesetting the text of the online abstracts 
(cf. source file of this chapter template \texttt{abstract}) and include them with the source files of your manuscript. 
Use the plain \texttt{abstract} command if the abstract is also to appear in the printed version of the book.}

\abstract{ We identify the linear space spanned by the real-valued excessive functions of a Markov process 
with the set of those functions which are quasimartingales when we compose them with the process. 
Applications to semi-Dirichlet forms are given.
We provide a unifying result which clarifies the relations between harmonic, 
co-harmonic, invariant, co-invariant, martingale and co-martingale functions, 
showing that in the conservative case they are all the same.
Finally, using the co-excessive functions, we  present a two-step approach to the existence of invariant probability measures.
}

\vspace{0.5cm}

\noindent
{\bf Keywords.} Semimartingale, quasimartingale, Markov process, invariant function, invariant measure. \\

\noindent
{\bf Mathematics Subject Classification
(2010).} 60J45, 31C05, 60J40, 60J25, 37C40, 37L40, 31C25.


\section{Introduction}
\label{sec:1}

Let $E$ be a Lusin topological space endowed with the Borel $\sigma$-algebra $\mathcal{B}$ and 
$X = (\Omega, \mathcal{F}, \mathcal{F}_t, X_t, \mathbb{P}^x, \zeta)$ be a right Markov process with state space $E$, 
transition function $(P_t)_{t \geq 0}$:  
$P_t u(x) = \mathbb{E}^x (u(X_t); t < \zeta)$, $t \geq 0$, $x\in E$.

One of the fundamental connections between potential theory and Markov processes is 
the relation between excessive functions and (right-continuous) supermartingales; see e.g. \cite{Do01}, 
Chapter VI, Section 10, or \cite{LG06}, Proposition 13.7.1 and Theorem 14.7.1. 
Similar results hold for (sub)martingales, and together stand as a keystone at the foundations of the so called probabilistic potential theory. 
For completeness, let us give the precise statement;
a short proof is included in Appendix.

\begin{proposition} \label{prop1}
The following assertions are equivalent for a non-negative real-valued $\mathcal{B}$-measurable function $u$ and $\beta \geq 0$.

i) $(e^{-\beta t}u(X_t))_{t\geq 0}$ is a right continuous $\mathcal{F}_t$-supermartingale w.r.t. $\mathbb{P}^x$ for all $x \in E$.

ii) The function  $u$ is $\beta$-excessive.
\end{proposition}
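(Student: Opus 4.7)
The plan is to handle the two implications separately, both resting on the Markov property combined with the pathwise regularity enjoyed by $\beta$-excessive functions in the right-process setting.

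For the direction (ii)$\Rightarrow$(i), I would use the Markov property directly: for $0 \le s \le t$,
\[
\eE^x\bigl(e^{-\beta t}u(X_t)\mid \mathcal{F}_s\bigr) = e^{-\beta s}\, e^{-\beta(t-s)} P_{t-s}u(X_s) \le e^{-\beta s} u(X_s),
\]
the inequality being the defining supermediant property of a $\beta$-excessive function. Right-continuity of the trajectory $t\mapsto u(X_t)$ is then the one non-routine point: it is not a direct consequence of $u$ being Borel, but it is a hallmark of the right-process framework that every $\beta$-excessive function is finely continuous and nearly Borel, so that $t \mapsto u(X_t)$ is right-continuous $\pP^x$-a.s. for every $x\in E$. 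I would invoke this property and combine it with continuity of $t \mapsto e^{-\beta t}$ to conclude.

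For (i)$\Rightarrow$(ii), I would specialise the supermartingale inequality at $s=0$ to recover $e^{-\beta t}P_t u(x)\le u(x)$, that is, the supermediant property. It then remains to show $\lim_{t\downarrow 0} e^{-\beta t} P_t u(x) = u(x)$. The supermediant bound already yields $\limsup_{t\downarrow 0} e^{-\beta t}P_t u(x) \le u(x)$. For the reverse bound, I would use the assumed right-continuity of the paths at $t=0$, which gives $e^{-\beta t} u(X_t) \to u(x)$ $\pP^x$-a.s., and apply Fatou's lemma (legitimate because $u \ge 0$) to obtain
\[
\liminf_{t\downarrow 0} e^{-\beta t} P_t u(x) = \liminf_{t\downarrow 0} \eE^x\bigl(e^{-\beta t}u(X_t)\bigr) \ge \eE^x\bigl(u(X_0)\bigr) = u(x).
\]

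The step I expect to be the genuine obstacle is justifying the right-continuity of $t \mapsto u(X_t)$ in (ii)$\Rightarrow$(i): a merely Borel non-negative function need not produce a right-continuous composition with $X$, so one must lean on the structural property of right Markov processes that $\beta$-excessive functions are finely continuous. Once that input is granted, both implications reduce to the Markov property together with Fatou's lemma, and measurability / integrability issues can be handled routinely by restricting attention to starting points where $u(x)<\infty$ and using monotonicity arguments at infinite values.
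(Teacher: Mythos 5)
Your proposal is correct, and the implication (ii)$\Rightarrow$(i) coincides with the paper's argument: the Markov property gives the supermartingale inequality, and right-continuity of $t\mapsto u(X_t)$ is obtained from the fine continuity of $\beta$-excessive functions (the paper cites Theorem 4.8 in Chapter II of Blumenthal--Getoor for the equivalence between fine continuity and a.s.\ right-continuity of $u(X)$). Where you genuinely diverge is in (i)$\Rightarrow$(ii). Both you and the paper first take expectations to get the $\beta$-supermedian property $e^{-\beta t}P_tu\le u$; but to upgrade ``supermedian'' to ``excessive'' the paper argues indirectly, reducing the claim (via Corollary 1.3.4 of \cite{BeBo04}) to the fine continuity of $u$, which it then deduces from the assumed right-continuity of the trajectories by the same Blumenthal--Getoor characterization. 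You instead verify the defining condition $\lim_{t\downarrow 0}P_tu=u$ head-on: the supermedian bound gives $\limsup_{t\downarrow 0}P_tu\le u$, and Fatou's lemma applied to $e^{-\beta t}u(X_t)\to u(X_0)=u(x)$ $\pP^x$-a.s.\ (legitimate since $u\ge 0$ and $X_0=x$ a.s.) gives $\liminf_{t\downarrow 0}e^{-\beta t}P_tu(x)\ge u(x)$. This is a more elementary and self-contained route for that half: it avoids both the external corollary and the fine-topology detour, at the modest cost of arguing along sequences $t_n\downarrow 0$ realizing the liminf. The paper's version, in exchange, exhibits the fine continuity of $u$ explicitly, which is the property reused elsewhere in the paper (e.g.\ in the necessity discussion preceding Theorem \ref{thm 2.1}). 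Both arguments are valid given the paper's definition of $\beta$-excessive as ``$\beta$-supermedian plus $\lim_{t\to 0}P_tf=f$ pointwise.''
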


Our first aim is to show that this connection can be extended to the space of 
differences of excessive functions on the one hand, and to {\it quasimartingales} 
on the other hand (cf. Theorem \ref{thm 2.1}  from Section \ref{sec:2}),  with concrete applications to semi-Dirichlet forms 
(see Theorem \ref{thm 3} below). 

\begin{remark} \label{remark1}
 Recall the following famous characterization from \cite{CiJaPrSh80}: 
 {\it If $u$ is a real-valued $\mathcal{B}$-measurable function then $u(X)$ is an $\mathcal{F}_t$-semimartingale w.r.t. all $\mathbb{P}^x$, $x\in E$ if and only if $u$ is locally the difference of two finite $1$-excessive functions.}
 \end{remark}
 
The main result  from Theorem \ref{thm 2.1}  should be regarded  as an extension of Proposition \ref{prop1} 
and as a refinement of the just mentioned characterization for semimartingales from Remark \ref{remark1}. 
However, we stress out that our result is not a consequence of the two previously known results. 

In Section \ref{sect.3} we focus on a special class of ($0$-)excessive functions called invariant, 
which were studied in the literature from several slightly different perspectives. 
Here, our aim is to provide a unifying result which clarifies the relations between harmonic, 
co-harmonic, invariant, and co-invariant functions, showing that in the Markovian (conservative) case they are all the same. 
The measurable structure of invariant functions is also involved. 
We give the results in terms of $L^p(E,m)$-resolvents of operators, where $m$ is assumed sub-invariant, allowing us to drop the strong continuity assumption.
In addition, we show that when the resolvent is associated to a right process, then the martingale functions and the
co-martingale ones (i.e., martingale w.r.t. to a dual process) also coincide.

The last topic where the existence of (co)excessive functions plays a fundamental role 
is the problem of existence of invariant probability measures 
for a fixed Markovian transition function $(P_t)_{t\geq 0}$ on a general measurable space $(E,\mathcal{B})$. 
Recall that the classical approach is to consider the dual semigroup of $(P_t)_{t\geq 0}$ 
acting on the space of all probabilities $P(E)$ on $E$, and to show that it or its integral means, 
also known as the Krylov-Bogoliubov measures, 
are relatively compact w.r.t. some convenient topology (metric) on $P(E)$ 
(e.g. weak topology, (weighted) total variation norm, Wasserstein metric, etc). 
In essence, there are two kind of conditions which stand behind the success of this approach: 
some (Feller) regularity of the semigroup $(P_t)_{t\geq 0}$ 
(e.g. it maps bounded and continuous (Lipschitz) functions into bounded and continuous (Lipschitz) functions), 
and the existence of some compact (or {\it small}) sets which are infinitely often visited by the process; 
see e.g. \cite{MeTw93a}, \cite{MeTw93b}, \cite{MeTw93c}, \cite{DaZa96}, \cite{LaSz06}, \cite{Ha10}, \cite{KoPeSz10}.
Our last aim is to present (in Section 4)  a result from  \cite{BeCiRo15a}, 
which offers a new (two-step) approach to the existence of invariant measures (see Theorem \ref{thm 2.3} below). 
In very few words, our idea was to first fix a convenient {\it auxilliary} measure $m$ 
(with respect to which each $P_t$ respects classes), and then to look at the dual semigroup of 
$(P_t)_{t\geq 0}$ acting not on measures as before, but on functions. 
In this way we can employ some weak $L^1(m)$-compactness 
results for the dual semigroup in order to produce a non-zero and non-negative co-excessive function.

At this point we would like to mention that most of the announced results, 
which are going to be presented in the next three sections, 
are exposed with details  in
\cite{BeCiRo15}, \cite{BeCiRo15a}, and \cite{BeCi16}. 

The authors had the pleasure to be coauthors of Michael R\"ockner and part of the results presented in this survey paper were obtained jointly.
So, let us conclude this introduction with a

\centerline{"Happy Birthday, Michael!"}

\section{Differences of excessive functions and quasimartingales of Markov processes}  
\label{sec:2}

Recall that the purpose of this section is to study those real-valued measurable functions $u$ having the property that $u(X)$ 
is a {\it $\mathbb{P}^x$-quasimartingale} for all $x \in E$ (in short, "$u(X)$ is a quasimartingale", or "$u$ is a quasimartingale function"). At this point we would like to draw the attention to the fact that in the first part of this section we study quasimartingales with respect to $\mathbb{P}^x$ for all $x \in E$, in particular all the inequalities involved are required to hold pointwise for all $x \in E$.
Later on we shall consider semigroups or resolvents on $L^p$ or Dirichlet spaces with respect to some duality measure, and in these situations we will explicitly mention if the desired properties are required to hold almost everywhere or outside some exceptional sets.

For the reader's convenience, let us briefly present some classic facts about 
quasimartingales in general.
 
\begin{definition} \label{defi 2.1}
Let $(\Omega, \mathcal{F}, \mathcal{F}_t, \mathbb{P})$ 
be a filtered probability space satisfying the usual hypotheses. 
An $\mathcal{F}_t$-adapted, right-continuous integrable process $(Z_t)_{t \geq 0}$ is called $\mathbb{P}$-{\rm quasimartingale} if
$$
{Var}^\mathbb{P}(Z):= \mathop{\sup}\limits_{\tau} \mathbb{E} \{ \mathop{\sum}\limits_{i = 1}^{n} |\mathbb{E}[Z_{t_i} - Z_{t_{i-1}}|\mathcal{F}_{t_{i-1}}]| + |Z_{t_n}|\} < \infty,
$$
where the supremum is taken over all partitions $\tau : 0 = t_0 \leq t_1 \leq \ldots \leq t_n < \infty$.
\end{definition}

Quasimartingales played an important role in the development of the theory 
of semimartingales and stochastic integration, mainly due to M. Rao's theorem according to which any 
quasimartingale has a unique decomposition as a sum of
a local martingale and a predictable process with paths of locally integrable variation. 
Conversely, one can show that any semimartingale with bounded jumps is locally a quasimartingale. 
However, to the best of our knowledge, their analytic or potential theoretic aspects have never been investigated or, maybe, brought out to light, before.

We return now to the frame given by a Markov process. 
Further in this section we deal with a right Markov process $X = (\Omega, \mathcal{F}, \mathcal{F}_t, X_t, \mathbb{P}^x, \zeta)$ 
with state space $E$ and transition function $(P_t)_{t \geq 0}$.
Although we shall not really be concerned with the lifetime formalism, if $X$ has lifetime $\xi$ and cemetery  point $\Delta$, 
then we make the convention $u(\Delta) = 0$ for all functions $u: E \to [-\infty, + \infty]$.

Recall that for $\beta \geq 0$, a $\mathcal{B}$-measurable function 
$f:E \rightarrow [0, \infty]$ is called {\it $\beta$-supermedian} if $P_t^\beta f \leq f$ pointwise on $E$, $t \geq 0$; 
 $(P_t^\beta)_{t \geq 0}$ denotes the  $\beta$-level of the semigroup of kernels $(P_t)_{t \geq 0}$, $P_t^\beta:= e^{-\beta} P_t$.
If $f$ is $\beta$-supermedian and $\lim\limits_{t \to 0} P_t f = f$  point-wise on $E$, then it is called {\it $\beta$-excessive}. 
It is well known that a $\mathcal{B}$-measurable function $f$ is $\beta$-excessive if and only if 
$\alpha U_{\alpha+\beta}f \leq f$, $\alpha >0$,  and $\lim\limits_{\alpha \to \infty} \alpha U_{\alpha}f = f$ point-wise on $E$, 
where $\mathcal{U} = (U_{\alpha})_{\alpha > 0}$ is the resolvent family of the process $X$, 
$U_\alpha := \int_0^\infty e^{-\alpha t} P_t dt$.
The convex cone of all $\beta$-excessive functions is denoted by $E(\mathcal{U}_\beta)$; 
here $\mathcal{U}_\beta$ denotes the $\beta$-level of the resolvent $\mathcal{U}$, $\mathcal{U}_\beta:= (U_{\beta +\alpha})_{\alpha > 0}$; the {\it fine topology} is the coarsest topology on $E$ such that all $\beta$-excessive functions are continuous, for some $\beta > 0$. 
If $\beta = 0$ we  drop the index $\beta$.

Taking into account the strong connection between excessive functions and supermartingales for Markov processes,
 the following characterization of M. Rao was our source of inspiration:
{\it a real-valued process on a filtered probability space $(\Omega, \mathcal{F}, \mathcal{F}_t, \mathbb{P})$ satisfying the usual hypotheses is a quasimartingale 
if and only if it is the difference of two positive right-continuous $\mathcal{F}_t$-supermartingales;}
see e.g. \cite{Pr05}, page 116.

As a first observation, note that if $u(X)$ is a quasimartingale, then the following two conditions for $u$ are necessary:
i) $\mathop{\sup}\limits_{t > 0} P_t|u|< \infty$ and ii) $u$ is finely continuous.
Indeed, since for each $x\in E$ we have that
$\mathop{\sup}\limits_{t} P_t|u|(x) = \mathop{\sup}\limits_{t}\mathbb{E}^x|u(X_t)| \leq {Var}^{\mathbb{P}^x}(u(X)) < \infty$, the first assertion is clear. 
The second one follows by the result from \cite{BlGe68} which is stated in the proof of Proposition \ref{prop1} in the Appendix
at the end of the paper.

For a real-valued function $u$, a partition $\tau$ of $\mathbb{R}^+$, $\tau : 0 = t_0 \leq t_1 \leq \ldots \leq t_n < \infty$, and $\alpha >0$ we set

\centerline{$V^\alpha(u) := \mathop{\sup}\limits_{\tau}V^\alpha_{\tau}(u), \quad V^\alpha_{\tau}(u) := \mathop{\sum}\limits_{i=1}^{n} P^\alpha_{t_{i-1}} |u - P^\alpha_{t_i - t_{i-1}}u| + P^\alpha_{t_n}|u|$,}

\noindent
where the supremum is taken over all finite partitions of $\mathbb{R}_+$. 

A sequence $(\tau_n)_{n \geq 1}$ of finite partitions of $\mathbb{R}_+$ is called 
{\it admissible} if it is increasing, $\mathop{\bigcup}\limits_{k \geq 1}\tau_k$ is dense in $\mathbb{R}_+$, 
and if $r \in \mathop{\bigcup}\limits_{k \geq 1}\tau_k$ then $r + \tau_n \subset \mathop{\bigcup}\limits_{k \geq 1}\tau_k$ for all $n \geq 1$.

We can state now our first result, it  is   a version of Theorem 2.6 from \cite{BeCi16}.

\begin{theorem} \label{thm 2.1}  
Let $u$ be a real-valued $\mathcal{B}$-measurable function and $\beta \geq 0$ such that $P_t|u| < \infty$ for all $t$. 
Then the following assertions are equivalent.

\vspace{0.2cm}
i) $(e^{-\beta t}u(X_t))_{t\geq 0}$ is a $\mathbb{P}^x$-quasimartingale for all $x \in E$. 

\vspace{0.2cm}
ii) $u$ is finely continuous and $\mathop{\sup}\limits_{n}V^\beta_{\tau_n}(u) < \infty$ for one (hence all) admissible sequence of partitions $(\tau_n)_n$.
 
\vspace{0.2cm}
iii) $u$ is a difference of two real-valued $\beta$-excessive functions. 

\end{theorem}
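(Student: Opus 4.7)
My plan is to establish the cycle (iii)$\Rightarrow$(i)$\Rightarrow$(ii)$\Rightarrow$(iii), with the last implication carrying all the substance. Implication (iii)$\Rightarrow$(i) is immediate: writing $u = u_1 - u_2$ with each $u_i$ $\beta$-excessive, Proposition~\ref{prop1} realises $e^{-\beta t}u(X_t)$ as a difference of two non-negative right-continuous $\mathbb{P}^x$-supermartingales, and M.~Rao's converse characterization (quoted just before the theorem) concludes that this difference is a $\mathbb{P}^x$-quasimartingale for every $x\in E$.

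For (i)$\Rightarrow$(ii), fine continuity of $u$ has already been extracted from right continuity of $u(X)$ in the remarks preceding the theorem, so only the quantitative part remains. With $Z_t := e^{-\beta t}u(X_t)$ and a partition $\tau: 0 = t_0 < \ldots < t_n$, a direct Markov-property computation gives
$$\mathbb{E}^x\bigl|\mathbb{E}[Z_{t_i} - Z_{t_{i-1}} \mid \mathcal{F}_{t_{i-1}}]\bigr| = P^\beta_{t_{i-1}}|u - P^\beta_{t_i - t_{i-1}}u|(x),$$
together with $\mathbb{E}^x|Z_{t_n}| = P^\beta_{t_n}|u|(x)$, so that summing recovers precisely $V^\beta_\tau(u)(x)$. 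Hence $V^\beta(u)(x)=\mathrm{Var}^{\mathbb{P}^x}(Z)<\infty$ for every $x\in E$, and a fortiori $\sup_n V^\beta_{\tau_n}(u)(x)<\infty$. Independence of the admissible sequence follows from the sub-additivity estimate $(u - P^\beta_{s+t}u)^\pm \leq (u - P^\beta_s u)^\pm + P^\beta_s(u - P^\beta_t u)^\pm$ (combine $(a+b)^\pm \leq a^\pm + b^\pm$ with positivity of $P^\beta_s$), which makes $V^\beta_\tau$ monotone under refinement and, together with the density of $\bigcup_n \tau_n$ in $\mathbb{R}_+$, delivers the ``for one hence for all'' statement.

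The heart of the argument is (ii)$\Rightarrow$(iii), where the Hahn-type decomposition of the variation must be lifted to the function level. I would define
$$v^\pm(x) := \sup_\tau \Bigl\{\sum_{i=1}^n P^\beta_{t_{i-1}}(u - P^\beta_{t_i - t_{i-1}}u)^\pm + P^\beta_{t_n}u^\pm\Bigr\}(x),$$
the supremum running over all finite partitions $\tau$. The same sub-additivity makes $V^\pm_\tau(u)$ monotone under refinement, so along any admissible $(\tau_n)$ one has $V^\pm_{\tau_n}(u) \uparrow v^\pm$ pointwise, both bounded by the finite quantity $\sup_n V^\beta_{\tau_n}(u)$. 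The signed telescoping identity
$$u = P^\beta_{t_n}u + \sum_{i=1}^n P^\beta_{t_{i-1}}(u - P^\beta_{t_i - t_{i-1}}u),$$
split into positive and negative parts and passed to the limit along an admissible sequence, yields $u = v^+ - v^-$. To show that $v^\pm$ is $\beta$-supermedian I prepend $\{0,s\}$ to $\tau$: the partition sum for $\{0,s\}\cup(s+\tau)$ dominates $P^\beta_s V^\pm_\tau(u)$ while being itself bounded by $v^\pm$, and taking the supremum in $\tau$ (by monotone convergence along an admissible sequence) gives $P^\beta_s v^\pm \leq v^\pm$.

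The step I expect to be the main obstacle is the upgrade from supermedian to $\beta$-excessive, namely the pointwise limit property $\lim_{t\downarrow 0} P^\beta_t v^\pm = v^\pm$. Here I would exploit the fine continuity of $u$, transferred to the partition sums via admissibility, and, if needed, replace $v^\pm$ by its excessive regularization $\hat v^\pm := \sup_\alpha \alpha U_{\alpha+\beta} v^\pm$. Because $\hat v^\pm$ agrees with $v^\pm$ outside a finely negligible set while $u=v^+-v^-$ is everywhere finely continuous, one still obtains $u = \hat v^+ - \hat v^-$ pointwise on $E$, completing the representation of $u$ as a difference of two real-valued $\beta$-excessive functions.
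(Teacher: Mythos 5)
The paper does not actually reproduce a proof of Theorem \ref{thm 2.1}: it defers to Theorem 2.6 of \cite{BeCi16} and only records the guiding idea in the remark that follows, namely the identity ${Var}^{\mathbb{P}^x}\bigl((e^{-\beta t}u(X_t))_{t\geq 0}\bigr)=V^\beta(u)(x)$ together with the replacement of the (possibly non-measurable) supremum $V^\beta(u)$ by $\sup_n V^\beta_{\tau_n}(u)$ along an admissible sequence. Your architecture --- Rao's characterization for (iii)$\Rightarrow$(i), the variation identity for (i)$\Rightarrow$(ii), and for (ii)$\Rightarrow$(iii) the positive/negative-part partition sums, supermedianity by prepending $\{0,s\}$, and excessive regularization reconciled with $u$ via fine continuity --- is exactly that strategy, and the individual computations you display (the conditional-expectation identity, the sub-additivity of $(u-P^\beta_{s+t}u)^{\pm}$, the telescoping identity) are correct.

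The one place where what you write would not stand as a proof is the passage from finiteness along \emph{one} admissible sequence to control of the supremum over \emph{all} partitions; this governs both the ``for one hence for all'' clause in (ii) and your claim that $V^{\pm}_{\tau_n}(u)\uparrow v^{\pm}$ with $v^{\pm}$ defined as a supremum over all $\tau$ yet bounded by $\sup_n V^\beta_{\tau_n}(u)$. Refinement-monotonicity plus density of $\bigcup_n\tau_n$ do not suffice: an arbitrary partition $\sigma$ need not be comparable to any $\tau_n$, and since $\bigcup_n\tau_n$ is countable (hence Lebesgue-null) one cannot even integrate the inequality $P^\beta_s v^{\pm}\le v^{\pm}$, obtained only for $s\in\bigcup_n\tau_n$, to reach the resolvent. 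One must approximate the points of $\sigma$ from the right by points of $\bigcup_n\tau_n$ and pass to the limit in $V^\beta_{\cdot}(u)$, which uses the right-continuity of $u(X)$ (equivalently the fine continuity of $u$ --- a hypothesis of (ii) that you invoke only at the regularization stage) together with Fatou's lemma. This is precisely the ``subtle aspect'' the paper's remark points at, so it needs an explicit lemma rather than one clause. With that in place the rest goes through; for the final identification $u=\hat v^{+}-\hat v^{-}$ a cleaner route than your ``finely negligible set'' argument is to note that $\hat v^{+}-\hat v^{-}=\lim_{\alpha\to\infty}\alpha U_{\alpha+\beta}u=u$, using the fine continuity of $u$ and domination of $|u|$ by the finite supermedian function $v^{+}+v^{-}$.
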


\begin{remark}
The key idea behind the previous result is that by the Markov property is not hard to show that for all 
$x \in E$ we have ${Var}^{\mathbb{P}^x}((e^{-\alpha t}u(X_t)_{t \geq 0}) = V^\alpha(u)(x)$, 
meaning that assertion i) holds if and only if $V^\alpha(u)<\infty$. But $V^\alpha(u)$ 
is a supremum of measurable functions taken over an uncountable set of partitions, hence
it may no longer be measurable, which makes it hard to handle in practice. 
Concerning this measurability issue, 
Theorem \ref{thm 2.1}, ii) states that instead of dealing with $V^\alpha(u)$, we can work with 
 $\mathop{\sup}\limits_{n}V^\alpha_{\tau_n}(u)$ for any admissible sequence of partitions $(\tau_n)_{n\geq 1}$. 
This subtile aspect was crucial in order to give criteria to check the quasimartingale nature of $u(X)$; see also Proposition \ref{prop 1} in the next subsection.    
\end{remark}

\subsection{Criteria for quasimartingale functions}   
\label{subsec:2.1}

In this subsection, still following \cite{BeCi16}, 
we provide general conditions for $u$ under which $(e^{-\beta t}u(X_t))_{t\geq 0}$
is a quasimartingale, which means that,  in particular, $(u(X_t))_{t\geq 0}$ is a semimartingale.  

\vspace{0.3cm}

Let us consider that $m$ is a $\sigma$-finite sub-invariant measure for $(P_t)_{t 
\geq 0}$ so that $(P_t)_{t \geq 0}$ 
extends uniquely to a strongly continuous semigroup of contractions on $L^p(m)$, $1 \leq p < \infty$; $\mathcal{U}$ may as well be extended to a strongly continuous resolvent family of contractions on $L^p(m)$, $1 \leq p < \infty$. 
The corresponding generators $({\sf L}_p, D({\sf L}_p) \subset L^p(m))$ are defined by
$$
D({\sf L}_p) = \{ U_{\alpha} f : f \in L^p(m) \},
$$
$$
{\sf L}_p(U_{\alpha} f) := \alpha U_{\alpha} f - f \quad {\rm for \; all} \; f \in L^p(m), \ 1 \leq p < \infty,
$$
with the remark that this definition is independent of $\alpha > 0$.

The corresponding notations for the dual structure are 
$\widehat{P}_t$ and $(\widehat{\sf L}_p, D(\widehat{\sf L}_p))$, 
and note that the adjoint of ${\sf L}_p$ is $\widehat{\sf L}_{p^\ast}$; $\frac{1}{p} + \frac{1}{p^\ast}=1$. Throughout, we denote the standard $L^p$-norms by $\| \cdot \|_p$, $1 \leq p \leq \infty$.

\vspace{0.2cm}

We present below the $L^p$-version of Theorem \ref{thm 2.1}; cf. Proposition 4.2 from \cite{BeCi16}.

\begin{proposition} \label{prop 1}   
The following assertions are equivalent for a $\mathcal{B}$-measurable function 
$u \in \mathop{\bigcup}\limits_{1 \leq p \leq \infty} L^p(m)$ and $\beta\geq 0$.

\vspace{0.2cm}

i) There exists an $m$-version $\widetilde{u}$ of $u$ such that 
$(e^{-\beta t}\widetilde{u}(X_t))_{t\geq 0}$ is a $\mathbb{P}^x$-quasimartingale for $x \in E$ $m$-a.e.

\vspace{0.2cm}

ii) For an admissible sequence of partitions 
$(\tau_n)_{n \geq 1}$ of $\mathbb{R}_+$, $\mathop{\sup}\limits_{n} V^\beta_{\tau_n}(u) < \infty$ $m$-a.e.

\vspace{0.2cm}

iii) There exist $u_1, u_2 \in E(\mathcal{U}_\beta)$ finite $m$-a.e. such that $u = u_1 - u_2$ $m$-a.e.
\end{proposition}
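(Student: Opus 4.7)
The strategy is to reduce each implication to the pointwise Theorem~\ref{thm 2.1} by choosing a suitable $m$-version of $u$. The two easier directions (iii) $\Rightarrow$ (i) and (i) $\Rightarrow$ (ii) follow once a reasonable version is at hand; the delicate step is (ii) $\Rightarrow$ (iii).

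For (iii) $\Rightarrow$ (i), I would take $\widetilde{u} := u_1 - u_2$ on the set $\{u_1 + u_2 < \infty\}$, which has full $m$-measure. Proposition~\ref{prop1} applied to each $u_i$ gives that $(e^{-\beta t}u_i(X_t))_{t \geq 0}$ is a right-continuous nonnegative $\mathbb{P}^x$-supermartingale for every $x$ with $u_i(x) < \infty$; Rao's characterization (cited earlier in this section) of quasimartingales as differences of two positive right-continuous supermartingales then yields (i) for $m$-a.e.~$x$. For (i) $\Rightarrow$ (ii), the Markov-property identity ${Var}^{\mathbb{P}^x}(e^{-\beta \cdot}\widetilde{u}(X)) = V^\beta(\widetilde{u})(x)$ from the Remark gives $V^\beta(\widetilde{u}) < \infty$ $m$-a.e., and sub-invariance of $m$ makes each $P_t$ respect $m$-classes, so $V^\beta_{\tau_n}(u) = V^\beta_{\tau_n}(\widetilde{u})$ $m$-a.e.; since $\sup_n V^\beta_{\tau_n}(\widetilde{u}) \leq V^\beta(\widetilde{u})$, (ii) follows.

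For the main implication (ii) $\Rightarrow$ (iii), I would manufacture a finely continuous pointwise $m$-version $\widetilde{u}$ of $u$ and then apply Theorem~\ref{thm 2.1} to it. Because $u \in L^p(m)$, the resolvent approximants $u^\alpha := \alpha U_{\alpha+\beta} u$ are finely continuous and converge to $u$ in $L^p(m)$; along a subsequence $\alpha_k \to \infty$ they converge $m$-a.e., and one sets $\widetilde{u}$ equal to their pointwise limit where it exists and is finite. The $m$-a.e.\ hypothesis $\sup_n V^\beta_{\tau_n}(u) < \infty$ transfers to $\widetilde{u}$ (once again by $m$-class preservation), and restricting to the finely open absorbing subset of full $m$-measure where additionally $P_t|\widetilde{u}| < \infty$ for all rational $t$, Theorem~\ref{thm 2.1} produces a decomposition $\widetilde{u} = u_1 - u_2$ with $u_1, u_2$ $\beta$-excessive on this subset, extended canonically to all of $E$.

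The main obstacle is precisely this last construction: one must (a) justify that the $L^p$-resolvent approximants really furnish a finely continuous $m$-version of $u$, and (b) show that the $\beta$-excessive functions produced on the absorbing subset of full $m$-measure can be promoted to genuine elements of $E(\mathcal{U}_\beta)$ on all of $E$. Both steps are standard in abstract potential theory but depend in an essential way on the sub-invariance of $m$ and on the fine-topological apparatus built around $\mathcal{U}_\beta$.
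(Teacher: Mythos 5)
Your reductions (iii) $\Rightarrow$ (i) and (i) $\Rightarrow$ (ii) are essentially sound. In (iii) $\Rightarrow$ (i) one line is missing: after setting $\widetilde{u}=u_1-u_2$ on $\{u_1+u_2<\infty\}$ you must know that $\widetilde{u}(X_t)=u_1(X_t)-u_2(X_t)$ along the \emph{whole} trajectory, not just at $t=0$; this holds $\mathbb{P}^x$-a.s.\ for every $x$ with $u_1(x),u_2(x)<\infty$ because a non-negative right-continuous supermartingale started at a finite value stays finite for all times, and only then does Rao's characterization apply. The implication (i) $\Rightarrow$ (ii) via ${Var}^{\mathbb{P}^x}=V^\beta(\widetilde{u})(x)$ and $m$-class preservation is fine.

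The implication (ii) $\Rightarrow$ (iii) is where the proposal genuinely breaks down, and the break is not the routine technicality you suggest. You propose to manufacture a finely continuous $m$-version of $u$ as the pointwise a.e.\ limit of $\alpha_k U_{\alpha_k+\beta}u$, using only $u\in L^p(m)$ and strong continuity of the resolvent. But a pointwise limit of finely continuous functions is not finely continuous (it is merely of fine Baire class one), and a general $L^p(m)$ function admits no finely continuous $m$-version at all (consider $1_{[0,\infty)}$ for one-dimensional Brownian motion, where every point is regular for both half-lines). The existence of a finely continuous version is in fact part of the \emph{content} of (iii) --- it is exactly what the difference $u_1-u_2$ of two excessive functions provides on $\{u_1+u_2<\infty\}$ --- so an argument that produces such a version before exploiting hypothesis (ii) is either circular or false. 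Your step (a), which you defer as ``standard in abstract potential theory,'' is therefore the whole mathematical difficulty, and the construction you give does not supply it.

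The workable route does not pass through a finely continuous version of $u$ at all; it builds the excessive functions directly from the quantities appearing in (ii). For $\tau_n: 0=t_0\leq\cdots\leq t_{k_n}$ put
$$
v_1^{(n)}:=\sum_{i=1}^{k_n}P^\beta_{t_{i-1}}\bigl(u-P^\beta_{t_i-t_{i-1}}u\bigr)^+ + P^\beta_{t_{k_n}}u^+,\qquad
v_2^{(n)}:=\sum_{i=1}^{k_n}P^\beta_{t_{i-1}}\bigl(u-P^\beta_{t_i-t_{i-1}}u\bigr)^- + P^\beta_{t_{k_n}}u^-.
$$
Telescoping gives $v_1^{(n)}-v_2^{(n)}=u$ for every $n$, the inequalities $(a+b)^+\leq a^+ + b^+$ and $(P_sf)^+\leq P_s(f^+)$ show that both sequences increase under refinement, and $v_i^{(n)}\leq V^\beta_{\tau_n}(u)$, so by (ii) the limits $v_1,v_2$ are $\beta$-supermedian and finite $m$-a.e., with $u=v_1-v_2$ where both are finite (admissibility of $(\tau_n)$ is what makes the limits supermedian). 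Passing to the excessive regularizations $u_i:=\widehat{v_i}\in E(\mathcal{U}_\beta)$ one gets $u_1-u_2=\lim_{\alpha\to\infty}\alpha U_{\alpha+\beta}(v_1-v_2)=\lim_{\alpha\to\infty}\alpha U_{\alpha+\beta}u=u$ $m$-a.e., the last equality along a subsequence by strong continuity of $\mathcal{U}$ on $L^p(m)$. This is the step your proof must contain and currently does not.
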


\begin{remark} \label{rem 4.3} Under the assumptions of Proposition \ref{prop 1}, 
if $u$ is finely continuous and one of the equivalent assertions is satisfied then 
all of the statements hold outside an $m$-polar set, not only $m$-a.e., 
since it is known that an $m$-negligible finely open set is automatically $m$-polar; 
if in addition $m$ is a reference measure then the assertions hold everywhere on $E$.
\end{remark}

Now, we focus our attention on a class of $\beta$-quasimartingale 
functions which arises as a natural extension of $D({\sf L}_p)$. 
First of all, it is clear that any function $u \in D({\sf L}_p)$, $1 \leq p < \infty$, 
has a representation 
$u = U_{\beta} f = U_{\beta}(f^+) - U_{\beta}(f^-)$ with $U_{\beta}(f^{\pm}) \in 
E(\mathcal{U}_{\beta}) \cap L^p(m)$, hence 
 $u$ has a $\beta$-quasimartingale version for all $\beta > 0$;
moreover, $\| P_t u - u \|_p = \left\| \int_0^t P_s{\sf L}_p u ds \right\|_p \leq t \| {\sf L}_p u \|_p$. 
The converse is also true, namely if $1 < p < \infty$, 
$u \in L^p(m)$, and $\| P_t u - u \|_p \leq {const} \cdot t$, $t \geq 0$, then $u \in D({\sf L}_p)$.  
But this is no longer the case if $p = 1$ (because of the lack of reflexivity of $L^1$), i.e.  
$\| P_t u - u \|_1 \leq {const} \cdot t$ does not imply $u \in D({\sf L}_1)$. 
However, it turns out that this last condition on $L^1(m)$ is yet enough to ensure that 
$u$ is a $\beta$-quasimartingale function.
In fact, the following general result holds; see \cite{BeCi16}, Proposition 4.4 and its proof.

\begin{proposition} \label{prop 3}  
Let $1 \leq p < \infty$ and suppose $\mathcal{A} \subset \{ u \in L^{p^{\ast}}_+(m) : \| u \|_{p^{\ast}} \leq 1 \}$, $\widehat{P}_s\mathcal{A} \subset \mathcal{A}$ for all $s \geq 0$, and 
$E = \mathop{\bigcup}\limits_{f \in \mathcal{A}}{\rm supp}(f)$ $m$-a.e. 
If $u \in L^p(m)$ satisfies

 \centerline{$\sup\limits_{f \in \mathcal{A}}\int_E |P_tu - u| f d m \leq {const} \cdot t$ for all $t \geq 0$,} 
\noindent
then there exists and $m$-version $\widetilde{u}$ of $u$ 
such that $(e^{-\beta t}\widetilde{u}(X_t))_{t\geq 0}$ 
is a $\mathbb{P}^x$-quasimartingale for all $x \in E$ $m$-a.e. and every $\beta  > 0$.
\end{proposition}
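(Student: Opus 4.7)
\vspace{0.3cm}
\noindent\textbf{Proof proposal.} The plan is to verify condition (ii) of Proposition \ref{prop 1}, i.e., that for an admissible sequence of partitions $(\tau_n)$ one has $\sup_n V^\beta_{\tau_n}(u) < \infty$ $m$-a.e.; the desired $m$-version $\widetilde u$ is then produced by Proposition \ref{prop 1} itself. My strategy is to test $V^\beta_\tau(u)$ against an arbitrary $f \in \mathcal{A}$ and exploit duality, together with the invariance $\widehat P_s \mathcal{A} \subset \mathcal{A}$ and the $L^{p^*}$-contractivity of $\widehat P_s$, to reduce the estimate to the hypothesis.

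Concretely, for a partition $\tau: 0 = t_0 \leq \dots \leq t_n$, setting $\Delta_i := t_i - t_{i-1}$, duality gives
\[
\int V^\beta_\tau(u)\, f\, dm = \sum_{i=1}^n e^{-\beta t_{i-1}} \int |u - P^\beta_{\Delta_i} u|\, \widehat P_{t_{i-1}} f\, dm \;+\; e^{-\beta t_n} \int |u|\, \widehat P_{t_n} f\, dm.
\]
Since $\widehat P_s f \in \mathcal{A}$ and every $g \in \mathcal{A}$ has $\|g\|_{p^*} \leq 1$, the last term is $\leq \|u\|_p$. For each summand, the pointwise inequality $|u - P^\beta_s u| \leq |u - P_s u| + (1 - e^{-\beta s}) P_s|u|$ combined with the hypothesis and contractivity of $\widehat P_s$ yields
\[
\int |u - P^\beta_s u|\, g\, dm \leq C s + (1 - e^{-\beta s})\|u\|_p \qquad \text{for every } g \in \mathcal{A}.
\]
Substituting, the $(1 - e^{-\beta \Delta_i})\|u\|_p$ contribution telescopes to $(1 - e^{-\beta t_n})\|u\|_p \leq \|u\|_p$. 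The remaining $C\sum_i e^{-\beta t_{i-1}}\Delta_i$ is not uniformly bounded in $\tau$ on its own, so I would complement it on those indices where $\Delta_i$ is large with the trivial estimate $\int |u - P^\beta_s u| g\, dm \leq 2\|u\|_p$. A short accounting (indices with $\Delta_i \leq 1$ give a Riemann sum bounded by a multiple of $1/\beta$; indices with $\Delta_i > 1$ are separated by at least $1$, so the corresponding $\sum e^{-\beta t_{i-1}}$ is dominated by a geometric series) yields $\int V^\beta_\tau(u)\, f\, dm \leq K$ uniformly in $\tau$, with $K$ depending only on $C$, $\beta$, $\|u\|_p$.

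A short triangle-inequality check shows that $V^\beta_\tau(u)$ is monotone under both refinement and extension of $\tau$, so by admissibility $V^\beta_{\tau_n}(u) \uparrow \sup_n V^\beta_{\tau_n}(u)$. Monotone convergence gives $\int \sup_n V^\beta_{\tau_n}(u)\, f\, dm \leq K$ for every $f \in \mathcal{A}$, hence $\sup_n V^\beta_{\tau_n}(u) < \infty$ $m$-a.e.\ on $\mathrm{supp}(f)$; the covering condition $E = \bigcup_{f \in \mathcal{A}} \mathrm{supp}(f)$ $m$-a.e.\ then propagates this $m$-a.e.\ to $E$, and Proposition \ref{prop 1} delivers $\widetilde u$.

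The main obstacle I expect is the uniform control of $\sum_i e^{-\beta t_{i-1}}\Delta_i$ over all partitions: the linear-in-$s$ estimate on $|u - P^\beta_s u|$ is too crude for large gaps, and one must interpolate it with the crude $L^p$-bound $2\|u\|_p$. Secondarily, the monotonicity of $V^\beta_\tau$ under refinement/extension, though parallel to the monotonicity of classical variation, requires a careful triangle-inequality verification to justify the monotone-convergence step.
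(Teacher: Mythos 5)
Your proof is correct and follows essentially the same route as the paper's (i.e., the proof of Proposition 4.4 in \cite{BeCi16}): dualize $V^\beta_\tau(u)$ against $f\in\mathcal{A}$, use $\widehat P_s\mathcal{A}\subset\mathcal{A}$ together with the hypothesis and the telescoping of $\sum_i e^{-\beta t_{i-1}}(1-e^{-\beta\Delta_i})$, and conclude via Proposition \ref{prop 1}, ii) and the covering condition on the supports. The only point where you work harder than necessary is the uniform bound over \emph{all} partitions: since Proposition \ref{prop 1}, ii) requires the bound only for \emph{one} admissible sequence, you may simply take one with mesh at most $1$ (e.g.\ dyadic), for which $\sum_i e^{-\beta t_{i-1}}\Delta_i\le e^{\beta}/\beta$ directly and the large-gap interpolation with the crude bound $2\|u\|_p$ becomes superfluous.
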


\vspace{0.2cm}

We end this subsection with the following criteria which is not given with respect to a duality measure, 
but in terms of the associated resolvent $\mathcal{U}$; cf. Proposition 4.1 from \cite{BeCi16}.

\begin{proposition} \label{prop 4}  
 Let $u$ be a real-valued $\mathcal{B}$-measurable finely continuous function. 

i) Assume there exist a constant $\alpha \geq 0$ and a non-negative $\mathcal{B}$-measurable function $c$ such that
$$
U_{\alpha}(|u| + c) < \infty, \quad \mathop{\lim\sup}\limits_{t \to \infty} P_t^{\alpha}|u| < \infty, \quad |P_t u - u| \leq c t, t \geq 0,
$$
and the functions $t \mapsto P_t(|u| + c)(x)$ are Riemann integrable.  
Then $(e^{-\alpha t}u(X_t))_{t \geq 0}$ is a $\mathbb{P}^x$-quasimartingale for all $x\in E$.

ii)  Assume there exist a constant $\alpha \geq 0$ and a non-negative $\mathcal{B}$-measurable function $c$ such that
$$
|P_t u- u| \leq c t, t \geq 0, \quad \mathop{\sup}\limits_{t \in \mathbb{R}_+} P_t^{\alpha}(|u| + c) = : b < \infty.
$$ 
Then $(e^{-\beta t}u(X_t))_{t \geq 0}$ is a $\mathbb{P}^x$-quasimartingale for all $x\in E$ and $\beta > \alpha$.

iii) Assume there exists $x_0 \in E$ such that for some $\alpha \geq 0$
$$
U_{\alpha}(|u|)(x_0) < \infty, \quad U_{\alpha}(|P_t u - u|)(x_0) \leq {const} \cdot t, \; t \geq 0.
$$
Then $(e^{-\beta t}u(X_t))_{t \geq 0}$ is a $\mathbb{P}^x$-quasimartingale for 
$\delta_{x_0}\circ U_\beta$-a.e. $x\in E$ and $\beta > \alpha$; 
if in addition $\mathcal{U}$ is strong Feller and topologically irreducible then 
the $\mathbb{P}^x$-quasimartingale property holds for all $x\in E$.
\end{proposition}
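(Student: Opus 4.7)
The plan is to verify criterion ii) of Theorem~\ref{thm 2.1} --- namely fine continuity of $u$ together with $\sup_n V^\beta_{\tau_n}(u)<\infty$ for some admissible sequence $(\tau_n)$ --- and then to conclude by the implication ii)$\Rightarrow$i). In (iii) this finiteness needs to hold only $\delta_{x_0}\circ U_\beta$-a.e., which will be obtained through the stronger condition $U_\beta\bigl(\sup_n V^\beta_{\tau_n}(u)\bigr)(x_0)<\infty$. The starting point common to all three cases is the one-step estimate
\[
|P^\beta_h u-u|\;\leq\;e^{-\beta h}|P_h u-u|+(1-e^{-\beta h})|u|\;\leq\;h(c+\beta|u|),
\]
obtained from $|P_h u-u|\leq ch$ and $1-e^{-\beta h}\leq\beta h$; this already delivers the fine continuity of $u$ upon letting $h\to 0$.

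For (ii), plugging the one-step bound directly into the sum defining $V^\beta_{\tau_n}(u)$ and using $P^\alpha_t(|u|+c)\leq b$ with $\beta>\alpha$ gives
\[
V^\beta_{\tau_n}(u)(x)\;\leq\;(1+\beta)b\sum_i(t_i-t_{i-1})e^{-(\beta-\alpha)t_{i-1}}+b,
\]
which is uniformly bounded in $n$ by a constant multiple of $1/(\beta-\alpha)$ for any admissible sequence of bounded mesh. For (i), where the exponential decay is unavailable, one first upgrades the one-step estimate to the integrated inequality
\[
|P^\alpha_t u-u|(x)\;\leq\;\int_0^t P^\alpha_s(c+\alpha|u|)(x)\,ds \qquad (\ast)
\]
by telescoping $|P^\alpha_{t+h}u-P^\alpha_t u|=|P^\alpha_t(P^\alpha_h u-u)|\leq hP^\alpha_t(c+\alpha|u|)$ along the uniform partition of $[0,t]$ of mesh $t/n$ and invoking the Riemann integrability of $s\mapsto P_s(|u|+c)(x)$ to identify the limit of the Riemann sums. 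Inserting $(\ast)$ into $V^\alpha_{\tau_n}(u)$ and reassembling the integrals by Fubini produces
\[
V^\alpha_{\tau_n}(u)(x)\;\leq\;U_\alpha(c+\alpha|u|)(x)+\sup_{t\geq 0}P^\alpha_t|u|(x),
\]
which is finite uniformly in $n$ thanks to $U_\alpha(|u|+c)<\infty$ and $\limsup_t P^\alpha_t|u|<\infty$. For (iii), apply $U_\beta(\cdot)(x_0)$ termwise to $V^\beta_{\tau_n}(u)$; using the elementary identity $U_\beta P^\beta_{t_{i-1}}g(x_0)=\int_{t_{i-1}}^\infty e^{-\beta q}P_qg(x_0)\,dq\leq e^{-(\beta-\alpha)t_{i-1}}U_\alpha g(x_0)$ (valid for $g\geq 0$, $\beta>\alpha$) combined with the hypothesis $U_\alpha|P_s u-u|(x_0)\leq Ks$ and $U_\beta g\leq U_\alpha g$ on non-negatives delivers a uniform bound in $n$. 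The strong Feller and topological irreducibility assumptions then guarantee that $\delta_{x_0}\circ U_\beta$ has full topological support, forcing the (fine-closed) exceptional set to be empty.

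The main obstacle is the rigorous derivation of $(\ast)$ without assuming $u$ lies in the domain of the $L^p$-generator: the Riemann-integrability hypothesis is exactly what is needed to identify the limit of the telescoping Riemann sums with the integral on the right. A further technical point in (iii) is that $(\ast)$ is available only in $U_\beta(\cdot)(x_0)$-averaged form, so the entire estimate of $V^\beta_{\tau_n}(u)$ has to be performed inside this linear functional, and the measurable surrogate $\sup_n V^\beta_{\tau_n}(u)$ provided by Theorem~\ref{thm 2.1}ii) is needed in order to apply monotone convergence and Fubini.
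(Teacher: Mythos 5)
Your overall strategy --- verifying condition ii) of Theorem~\ref{thm 2.1} by bounding $V^\beta_{\tau_n}(u)$ uniformly in $n$ --- is exactly the route the paper intends (it defers the proof to Proposition~4.1 of the cited reference, but Theorem~\ref{thm 2.1}\,ii) with admissible partitions is advertised precisely as the tool for such criteria). Parts i) and ii) are correct: the one-step bound $|P^\beta_h u-u|\le h(c+\beta|u|)$, the telescoping upgrade to $|P^\alpha_t u-u|\le\int_0^t P^\alpha_s(c+\alpha|u|)\,ds$ via the Riemann-integrability hypothesis, and the reassembly $\sum_i P^\alpha_{t_{i-1}}\int_0^{t_i-t_{i-1}}P^\alpha_s(\cdot)\,ds=\int_0^{t_n}P^\alpha_q(\cdot)\,dq\le U_\alpha(\cdot)$ all work, as does the geometric-decay bound in ii) for $\beta>\alpha$. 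The same goes for the $\delta_{x_0}\circ U_\beta$-a.e.\ part of iii), using $U_\beta P^\beta_s g=\int_s^\infty e^{-\beta r}P_r g\,dr\le e^{-(\beta-\alpha)s}U_\alpha g$ and monotone convergence along the nested partitions. One cosmetic slip: the estimate $|P_hu-u|\le ch$ gives only pointwise convergence $P_hu\to u$, which does \emph{not} imply fine continuity; this is harmless here because fine continuity is a standing hypothesis of the proposition, but you should not claim to derive it.

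The genuine gap is the last sentence of iii). You assert that under the strong Feller and topological irreducibility assumptions the measure $\delta_{x_0}\circ U_\beta$ has full topological support and that this ``forces the (fine-closed) exceptional set to be empty.'' Two problems: first, the exceptional set $N=\{\sup_n V^\beta_{\tau_n}(u)=\infty\}$ is a countable supremum of kernels applied to $u$ and is not obviously finely closed (the terms $P^\beta_t|u-P^\beta_hu|$ are not finely continuous in general); second, even a finely closed set that is null for a measure of full topological support need not be empty --- a polar singleton is finely closed, negligible for every $U_\beta(x,\cdot)$, and nonempty. So full support alone cannot conclude. The standard way to close this is different: from the a.e.\ statement one gets, via Theorem~\ref{thm 2.1}\,iii) (or Proposition~\ref{prop 1}), a decomposition $u=u_1-u_2$ with $u_1,u_2\in E(\mathcal{U}_\beta)$ holding $\delta_{x_0}\circ U_\beta$-a.e., and one must then use the strong Feller property together with irreducibility to show that the $\beta$-excessive functions $u_i$ are finite everywhere and that the identity $u=u_1-u_2$ between finely continuous functions propagates from a set of full $\delta_{x_0}\circ U_\beta$-measure to all of $E$ (an $m$-negligible finely \emph{open} set is empty in this setting). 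As written, your argument for the ``for all $x$'' upgrade does not go through.
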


\subsection{Applications to semi-Dirichlet forms}  

Assume now that the semigroup $(P_t)_{t\geq 0}$ is associated 
to a semi-Dirichlet form $(\mathcal{E},\mathcal{F})$ on $L^2(E,m)$, 
where $m$ is a $\sigma$-finite measure on  the Lusin measurable space $(E,\cb)$;
as standard references 
for the theory of (semi-)Dirichlet forms we refer the reader to 
\cite{MaRo92}, \cite{MaOvRo95}, \cite{FuOsTa11}, \cite{Os13}, 
but also \cite{BeBo04}, Chapter 7. 
By Corollary 3.4 from \cite{BeBoRo06a}
there exists a (larger) Lusin topological space $E_1$ such
that  $E\subset E_1$, $E$ belongs to $\cb_1$ (the $\sigma$-algebra
of all Borel subsets of $E_1$), $\cb=\cb_1|_E$, 
and $(\mathcal{E}, \mathcal{F})$
regarded as a semi-Dirichlet form on $L^2(E_1 , \overline{m})$
is quasi-regular, where $\overline{m}$ is the trivial extension of $m$ to $(E_1,
\cb_1)$. 
Consequently, we may consider  a right Markov process $X$ with state space $E_1$ which 
is associated with the semi-Dirichlet form $(\mathcal{E},\mathcal{F})$.

If $u \in \mathcal{F}$ then $\widetilde{u}$ 
denotes a quasi continuous version of $u$ as a function on $E_1$ which always exists 
and it is uniquely determined quasi everywhere.
Following \cite{Fu99}, 
for a closed set $F$ we define $\mathcal{F}_{b, F}
:=\{v\in \mathcal{F} :  v \mbox{ is bounded and } v=0 \; m\mbox{-a.e. on } E\setminus F\}$.

The next result is a version of Theorem 5.5 from \cite{BeCi16}, 
dropping the a priori assumption that the semi-Dirichlet form is quasi-regular. 

\begin{theorem} \label{thm 3}
Let $u \in \mathcal{F}$ and assume there exist a nest $(F_n)_{n\geq1}$ and constants $(c_n)_{n\geq 1}$ such that
$$
\mathcal{E}(u,v) \leq c_n \|v\|_\infty \;\; \mbox{for all} \; v\in \mathcal{F}_{b, F_n}.
$$
Then $\widetilde{u}(X)$ is a $\mathbb{P}^x$-semimartingale for $x\in E_1$ quasi everywhere.
\end{theorem}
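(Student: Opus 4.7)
The plan is to combine the quasimartingale framework of Section \ref{sec:2} with a localization along the nest $(F_n)$. By \cite{BeBoRo06a} I may work on the Lusin extension $(E_1,\mathcal{B}_1,\overline m)$, so that a right Markov process $X$ and its part processes $X^{F_n}$ (with semigroups $(P_t^{F_n})_{t\geq 0}$) are available. Since $\mathcal{F}_{b,F_n}$ is a linear subspace, the hypothesis upgrades at once to $|\mathcal{E}(u,v)|\leq c_n\|v\|_\infty$ for every $v\in\mathcal{F}_{b,F_n}$. The goal is to show, for each $n$, that $\widetilde u$ is a quasimartingale function for the part process $X^{F_n}$; together with the fact that the exit times from $F_n$ exhaust $[0,\zeta)$ q.e.\ (definition of a nest), this will yield the claimed semimartingale property globally q.e.

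The core step is to turn the infinitesimal bound on $\mathcal{E}(u,\cdot)$ into a time-uniform Lipschitz bound. Using that the part form inherits the semi-Dirichlet property, that its dual semigroup $(\widehat P_t^{F_n})_{t\geq 0}$ is a Markovian contraction on $L^\infty$, and that it preserves a sufficiently rich dense subclass of $\mathcal{F}_{b,F_n}$, the standard identity
\[
(u-P_t^{F_n}u,\,v)_m \;=\; \int_0^t \mathcal{E}(u,\widehat P_s^{F_n} v)\,ds
\]
combined with the hypothesis gives $|(u-P_t^{F_n}u,v)_m|\leq c_n t\,\|v\|_\infty$ for every $t\geq 0$ and every such $v$, and by density for every bounded $\mathcal{B}$-measurable $v$ supported in $F_n$.

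With this estimate in hand I apply Proposition \ref{prop 3} (with $p=1$), taking $\mathcal{A}$ to be the orbit under $(\widehat P_s^{F_n})_{s\geq 0}$ of a family in the unit ball of $L^\infty_+(F_n,m)$ whose supports cover $F_n$ $m$-a.e.; this family is invariant under $\widehat P_s^{F_n}$ by construction. One obtains an $m$-version $\widetilde u$ for which $(e^{-\beta t}\widetilde u(X^{F_n}_t))_{t\geq 0}$ is a $\mathbb{P}^x$-quasimartingale for $m$-a.e.\ $x\in F_n$ and every $\beta>0$. By the quasi-continuity of $\widetilde u$ and the principle recorded in Remark \ref{rem 4.3}, the exceptional set may be upgraded to a properly exceptional set of zero capacity. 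Since a quasimartingale is a semimartingale (M.\ Rao) and multiplication by $e^{\beta t}$ preserves the semimartingale property, $\widetilde u(X^{F_n})$ is a semimartingale q.e.; letting $n\to\infty$ along the nest concludes the proof.

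The main obstacle I anticipate is securing, in the genuinely non-symmetric semi-Dirichlet setting, the invariance of (a dense subclass of) $\mathcal{F}_{b,F_n}$ under the dual part semigroup and the integrated identity used above. For symmetric Dirichlet forms this is classical (\cite{Fu99}); in the semi-Dirichlet case one must argue on a core, typically via the resolvent approximation $v_\alpha := \alpha(\alpha-\widehat{\sf L}^{F_n})^{-1}v$, which lies in $\mathcal{F}_{b,F_n}$ thanks to the Markovian property of the co-resolvent and retains a uniform $L^\infty$ bound. Once this technical point is in place, the rest is routine bookkeeping with the tools developed in Section \ref{sec:2}.
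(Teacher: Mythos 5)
Your proposal follows the same route the paper indicates for Theorem \ref{thm 3}: localize along the nest, verify the $L^1$-type hypothesis of Proposition \ref{prop 3} for the part processes via the integrated identity $(u-P_t^{F_n}u,v)_m=\int_0^t\mathcal{E}(u,\widehat{P}_s^{F_n}v)\,ds$, and conclude that $\widetilde u(X)$ is prelocally, hence globally, a semimartingale. The technical point you flag --- controlling the dual part semigroup on a suitable class of test functions in the genuinely non-symmetric semi-Dirichlet setting, where $\widehat{P}_t$ need not be an $L^\infty$-contraction --- is precisely the crux treated in the detailed proof in \cite{BeCi16}, and your resolvent-approximation remedy is the standard way to handle it.
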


\begin{remark} 
The previous result has quite a history behind and 
we take the opportunity to recall some previous achievements on the subject. 
First of all, without going into details, note that if $E$ is a bounded domain in $\mathbb{R}^{d}$ 
(or more generally in an abstract Wiener space) and the condition from Theorem \ref{thm 3} 
holds for $u$ replaced by the canonical projections, 
then the conclusion is that the underlying Markov process is a semimartingale. 
In particular, the semimartingale nature of reflected diffusions on general bounded domains can be studied. 
This problem dates back to the work of \cite{BaHs90}, 
where the authors showed that the reflected Brownian motion on a Lipschitz domain in $\mathbb{R}^d$ 
is a semimartingale. Later on, this result has been extended to more general domains and diffusions; 
see \cite{WiZh90}, \cite{Ch93}, \cite{ChFiWi93}, and \cite{PaWi94}. 
A clarifying result has been obtained in \cite{ChFiWi93}, 
showing that the stationary reflecting Brownian motion on a bounded Euclidian domain 
is a quasimartingale on each compact time interval if and only if the domain is a strong Caccioppoli set. 
At this point it is worth to emphasize that in the previous sections we studied quasimartingales 
on the hole positive real semi-axis, not on finite intervals. 
This slight difference is a crucial one which makes our approach possible and completely different. 
A complete study of these problems (including Theorem \ref{thm 3} 
but only in the symmetric case) have been done in a series of papers by M. Fukushima and co-authors 
(we mention just \cite{Fu99}, \cite{Fu00}, and \cite{FuHi01}), 
with deep applications to BV functions in both finite and infinite dimensions. 

All these previous results have been obtained using the same common tools: symmetric Dirichlet forms and Fukushima decomposition. 
Further applications to the reflection problem in infinite dimensions have been studied in \cite{RoZhu12} and \cite{RoZhu15}, 
where non-symmetric situations were also considered. 
In the case of semi-Dirichlet forms, a Fukushima decomposition is not yet known to hold, 
unless some additional hypotheses are assumed (see e.g. \cite{Os13}). 
Here is where our study developed in the previous sections played its role, 
allowing us to completely avoid Fukushima decomposition or the existence of the dual process. 
On brief, the idea of proving Theorem \ref{thm 3} is to show that locally, 
the conditions from Proposition \ref{prop 3} are satisfied, so that $u(X)$ is (pre)locally a semimartingale, and hence a global semimartingale.
\end{remark}

Assume  that $(\mathcal{E}, \mathcal{F})$ is  quasi-regular and that it is  
{\it local}, i.e., $\mathcal{E}(u,v)=0$ 
for all $u,v \in \mathcal{F}$ with disjoint compact supports. 
It is well known that the local property is equivalent with the fact that the associated process is a diffusion; 
see e.g. \cite{MaRo92}, Chapter V, Theorem 1.5. As in \cite{Fu00}, 
the local property of $\mathcal{E}$ allows us to extend Theorem \ref{thm 3} 
to the case when $u$ is only locally in the domain of the form, or to even more general situations, as stated in the next result; 
for details see Subsection 5.1  from \cite{BeCi16}.

\begin{corollary} \label{coro 5.4}  
Assume that $(\mathcal{E}, \mathcal{F})$ is local.
Let $u$ be a real-valued $\mathcal{B}$-measurable finely continuous function and let 
$(v_k)_k \subset \mathcal{F}$ such that $v_k \mathop{\longrightarrow}\limits_{k \to\infty} u$ 
point-wise outside an $m$-polar set and boundedly on each element of a nest $(F_n)_{n \geq 1}$. 
Further, suppose that there exist constants $c_n$ such that
$$
|\mathcal{E}(v_k, v)| \leq c_n \|v\|_{\infty} \;\; for \; all \; v \in \mathcal{F}_{b, F_n}.
$$
Then $u(X)$ is a $\mathbb{P}^x$-semimartingale for $x \in E$ quasi everywhere.
\end{corollary}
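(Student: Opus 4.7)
The plan is to localize along the nest $(F_n)_{n\geq 1}$. Since $(\mathcal{E}, \mathcal{F})$ is local, the associated right Markov process $X$ is a diffusion, so the first exit times $\sigma_n := \inf\{t \geq 0 : X_t \notin F_n\}$ are stopping times with $\sigma_n \uparrow \zeta$ quasi everywhere (because $(F_n)$ is a nest). It therefore suffices to show that for each $n$ the stopped process $u(X_{\cdot \wedge \sigma_n})$ is a $\mathbb{P}^x$-semimartingale for $x$ quasi everywhere, since prelocal semimartingales are semimartingales. Fine continuity of $u$ and continuity of paths of $X$ ensure that $u(X_{\cdot \wedge \sigma_n})$ is in any case a right-continuous adapted process to which the quasimartingale machinery of Section \ref{sec:2} applies.

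First I would apply Theorem \ref{thm 3} to each $v_k$: the hypothesis $|\mathcal{E}(v_k, v)| \leq c_n \|v\|_\infty$ for $v \in \mathcal{F}_{b,F_n}$ is exactly what Theorem \ref{thm 3} requires, so $\widetilde{v_k}(X)$ is a $\mathbb{P}^x$-semimartingale quasi everywhere. Tracing through the proof (through Propositions \ref{prop 3} and \ref{prop 1}) the quantitative output is a bound on the quasimartingale variation $V^1_{\tau}(v_k)$ of the stopped process $v_k(X_{\cdot \wedge \sigma_n})$, depending only on $c_n$ and on $\sup_k \|v_k\|_{L^\infty(F_n)}$; the latter is finite by the bounded convergence hypothesis on $F_n$. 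Crucially, this bound is uniform in $k$ and in the admissible partition $\tau$.

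Next I pass to the limit $k \to \infty$. Because $X_t \in F_n$ for all $t < \sigma_n$ by path continuity and $v_k \to u$ boundedly on $F_n$ outside an $m$-polar set, dominated convergence yields $v_k(X_{t \wedge \sigma_n}) \to u(X_{t \wedge \sigma_n})$ boundedly under $\mathbb{P}^x$ for $x$ quasi everywhere. Applying Fatou inside the finite sum defining $V^1_{\tau_n}$ transfers the uniform bound to $u(X_{\cdot \wedge \sigma_n})$, so Proposition \ref{prop 1} (the implication ii)$\Rightarrow$i)) gives that $e^{-t} u(X_{t \wedge \sigma_n})$ is a $\mathbb{P}^x$-quasimartingale quasi everywhere, in particular a semimartingale. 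Letting $n \to \infty$ concludes the proof. The main obstacle is ensuring that the variation bound extracted from $c_n$ is genuinely uniform in $k$ and survives the pointwise limit: one must check that the estimate proved in Theorem \ref{thm 3} really factors through the form bound (and not through $k$-dependent data such as $\mathcal{E}(v_k,v_k)^{1/2}$) and that, after passing through Fatou, the resulting bound on $V^1_{\tau_n}(u)$ is compatible with applying Proposition \ref{prop 1} to the right-continuous process $u(X^{\sigma_n})$. The locality of the form is indispensable precisely because it forces path continuity, confining $X$ to $F_n$ before $\sigma_n$ where the bounded convergence $v_k \to u$ is in force.
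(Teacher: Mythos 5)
Your overall architecture --- localize along the nest via the exit times $\sigma_n$, use locality to confine the path to $F_n$ before $\sigma_n$, control the quasimartingale variation of the stopped processes through the form bound, and conclude by prelocality of the semimartingale property --- is exactly the route the paper indicates (it defers the details to Subsection 5.1 of \cite{BeCi16}, and the remark after Theorem \ref{thm 3} describes precisely this ``(pre)locally a semimartingale, hence a global semimartingale'' scheme). So the approach is the intended one.

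However, there is a genuine gap at the pivotal step, and you flag it yourself without closing it. Theorem \ref{thm 3} is a purely qualitative statement: it tells you that each $\widetilde{v_k}(X)$ is a semimartingale, but it does not export any quantitative bound on the variation $V^{1}_{\tau}$ of the stopped (or killed) processes $v_k(X^{\sigma_n})$. To pass to the limit in $k$ you need the estimate
$\sup_k \sup_{\tau} V^{1}_{\tau}\bigl(v_k;\, X^{\sigma_n}\bigr) < \infty$,
and this cannot be extracted from the statement of Theorem \ref{thm 3}; it requires re-running its proof at the level of the part process on $F_n$, i.e.\ showing that the hypothesis of Proposition \ref{prop 3} holds for the part semigroup $P^{F_n}_t$ with a constant controlled by $c_n$ alone. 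That step is nontrivial: one must justify the identity $\int v_k(\widehat{P}^{F_n}_t g - g)\,dm = -\int_0^t \mathcal{E}(v_k, \widehat{P}^{F_n}_s g)\,ds$ and the fact that $\widehat{P}^{F_n}_s g \in \mathcal{F}_{b,F_n}$ with $\|\widehat{P}^{F_n}_s g\|_\infty \le \|g\|_\infty$, so that the bound really ``factors through'' $c_n$ and $\sup_k\|v_k\|_{L^\infty(F_n)}$ and not through $k$-dependent data. Writing ``one must check that the estimate really factors through the form bound'' identifies the crux but does not prove it. Two secondary points also need care: Proposition \ref{prop 1} is stated for the semigroup of $X$ itself, so applying it to the stopped/killed process means verifying its hypotheses for the part process (right process, sub-invariant measure for $P^{F_n}_t$); and the prelocalization argument uses $\sigma_n \uparrow \zeta$ rather than $\uparrow \infty$, so you must invoke the convention $u(\Delta)=0$ to upgrade ``semimartingale up to $\zeta$'' to a global semimartingale. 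With the uniform variation estimate actually established, the rest of your argument (bounded convergence along the confined path, Fatou over each finite partition, Proposition \ref{prop 1} ii) $\Rightarrow$ i)) goes through.
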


\section{Excessive and invariant functions on $L^p$-spaces}  \label{sect.3}    

Throughout this section $\mathcal{U}=(U_\alpha)_{\alpha > 0}$ is a sub-Markovian resolvent of kernels on 
$E$ and $m$ is a $\sigma$-finite sub-invariant measure, i.e. $m( \alpha U_\alpha f ) \leq m(f)$ for all $\alpha >0$ and non-negative $\mathcal{B}$-measurable functions $f$; 
then there exists a second sub-Markovian resolvent of kernels on $E$ denoted by 
$\mathcal{\widehat{U}}=(\widehat{U}_\alpha)_{\alpha>0}$ 
which is in {\it weak duality}  with $\mathcal{U}$  w.r.t. $M$ in the sense that 
$\int_E fU_\alpha g dm = \int_E g\widehat{U}_\alpha f dm$ 
for all positive $\mathcal{B}$-measurable functions $f,g$ and $\alpha >0$. 
Moreover, both resolvents can be extended to contractions on any $L^p(E,m)$-space for all $1\leq p\leq \infty$, 
and if they are strongly continuous then we keep the same notations for their generators as in Subsection \ref{subsec:2.1}. 
In this part, our attention focuses on a special class of differences of excessive functions 
(which are in fact harmonic when the resolvent is Markovian). Extending \cite{AlKoRo97a}, they are defined as follows.

\begin{definition} A real-valued $\mathcal{B}$-measurable function 
$v \in \bigcup_{1 \leq p \leq \infty} L^p(E, m)$ is called {\it $\mathcal{U}$-invariant} provided that 
$U_{\alpha}(vf) = v U_{\alpha} f$ $m$-a.e. for all bounded and $\mathcal{B}$-measurable functions 
$f$ and $\alpha > 0$. 
\end{definition}

A set $A \in \mathcal{B}$ is called {\it $\mathcal{U}$-invariant} if $1_A$ is $\mathcal{U}$-invariant; 
the collection of all $\mathcal{U}$-invariant sets is a $\sigma$-algebra.

\begin{remark} \label{rem 2.14} 
If $v \geq 0$ is $\mathcal{U}$-invariant, then by \cite{BeCiRo15}, 
Proposition 2.4 there exists $u \in E(\mathcal{U})$ such that $u = v$ $m$-a.e. 
If $\alpha U_{\alpha} 1 = 1$ $m$-a.e. then for every invariant function $v$ 
we have that $\alpha U_{\alpha} v = v$ $m$-a.e, which is equivalent (if $\mathcal{U}$ 
is strongly continuous) with $v$ being ${\sf L}_p$-harmonic, i.e. $v \in D({\sf L}_p)$ and ${\sf L}_pv=0$. 
\end{remark}

The following result is a straightforward consequence of the duality between $\mathcal{U}$ and $\widehat{\mathcal{U}}$;
for  its proof see Proposition 2.24 and Proposition 2.25 from \cite{BeCiRo15}.

\begin{proposition} \label{prop 2.15}
The following assertions hold.

i)  A function $u$ is $\mathcal{U}$-invariant if and only if it is $\widehat{\mathcal{U}}$-invariant.

ii) The set of all $\mathcal{U}$-invariant functions from $L^p(E, m)$
 is a vector lattice with respect to the point-wise order relation.
\end{proposition}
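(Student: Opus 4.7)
\medskip\noindent\textbf{Part (i).} The plan is a short computation via weak duality. Assume $u\in L^p(E,m)$ is $\mathcal{U}$-invariant; to verify $\widehat{U}_\alpha(uf)=u\,\widehat{U}_\alpha f$ $m$-a.e.\ for every bounded Borel $f$, I pair both sides against an arbitrary bounded $g\in L^{p^\ast}(E,m)$ and run
\begin{align*}
\int_E g\,\widehat{U}_\alpha(uf)\,dm
 &= \int_E uf\cdot U_\alpha g\,dm
  = \int_E f\cdot u\,U_\alpha g\,dm \\
 &= \int_E f\cdot U_\alpha(ug)\,dm
  = \int_E ug\cdot \widehat{U}_\alpha f\,dm
  = \int_E g\cdot u\,\widehat{U}_\alpha f\,dm,
\end{align*}
invoking weak duality at the first and fourth equalities and the $\mathcal{U}$-invariance of $u$ at the third. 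Since $g$ ranges over a separating family, the $m$-a.e.\ identity follows; the converse is symmetric.

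\medskip\noindent\textbf{Part (ii): preliminaries and the bounded case.} Write $\mathcal{I}_p$ for the set of $\mathcal{U}$-invariant functions in $L^p(E,m)$; it is patently a vector space. My plan rests on two preparatory observations. First, $\mathcal{I}_p$ is closed in $L^p$: if $v_k\to v$ in $L^p$ with $v_k\in\mathcal{I}_p$ and $f$ is bounded Borel, then $v_kf\to vf$ in $L^p$ forces $U_\alpha(v_kf)\to U_\alpha(vf)$ in $L^p$ by contractivity, while $v_k\,U_\alpha f\to v\,U_\alpha f$ in $L^p$ since $U_\alpha f$ is bounded, so the invariance identity passes to the limit. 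Second, bounded elements of $\mathcal{I}_p$ form an algebra, because two applications of the defining identity give $U_\alpha(uvf)=u\,U_\alpha(vf)=uv\,U_\alpha f$. Via $u\vee v=\tfrac12(u+v+|u-v|)$ the lattice property reduces to showing $v\in\mathcal{I}_p\Rightarrow|v|\in\mathcal{I}_p$. For bounded $v$ I introduce the regularization $w_\epsilon:=\sqrt{v^2+\epsilon^2}-\epsilon$, satisfying $0\le w_\epsilon\le|v|$ and $w_\epsilon\to|v|$ pointwise as $\epsilon\downarrow 0$; writing $w_\epsilon=f_\epsilon(v^2)$ with $f_\epsilon(t)/t=1/(\sqrt{t+\epsilon^2}+\epsilon)$ continuous on $[0,\|v\|_\infty^2]$, Weierstrass supplies polynomials $r_n\to f_\epsilon(t)/t$ uniformly, so $q_n(t):=t\,r_n(t)$ satisfies $q_n(0)=0$, $q_n\to f_\epsilon$ uniformly, and $|q_n(t)|\le Ct$. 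Each $q_n(v^2)$ is then a constant-free polynomial in $v$, hence in $\mathcal{I}_p$ by the algebra property, and the domination $|q_n(v^2)|\le C\|v\|_\infty|v|\in L^p$ yields $q_n(v^2)\to w_\epsilon$ in $L^p$; closedness delivers $w_\epsilon\in\mathcal{I}_p$, and sending $\epsilon\downarrow 0$ with dominant $|v|$ produces $|v|\in\mathcal{I}_p$.

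\medskip\noindent\textbf{Main obstacle.} The delicate step I expect to wrestle with is removing the boundedness of $v$: powers of an unbounded $v\in L^p$ need not lie in $L^p$, so the algebra trick is unavailable directly. My plan is to reduce to the bounded case via Remark \ref{rem 2.14}. For a non-negative $v\in\mathcal{I}_p$, replaced by its excessive representative, I would show directly that every super-level set $\{v>c\}$ is $\mathcal{U}$-invariant, by testing $U_\alpha(vf)=v\,U_\alpha f$ against $f$ supported on the complementary set and exploiting fine continuity to handle measurability; the truncations $v\wedge n$ are then bounded invariant functions converging to $v$ in $L^p$, so the bounded case together with $L^p$-closedness closes the non-negative situation. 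The signed case then follows by decomposing $v=u_1-u_2$ with both summands non-negative invariant, which is provided by the difference-of-excessive structure guaranteed by Remark \ref{rem 2.14}.
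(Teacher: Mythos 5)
Your part (i) is correct and is precisely the duality computation the paper has in mind (the paper gives no proof here, deferring to Propositions 2.24--2.25 of the cited reference and describing the result as a straightforward consequence of weak duality). The bounded half of part (ii) is also sound: the $L^p$-closedness of the set of invariant functions, the algebra property for bounded invariant functions, and the Weierstrass approximation of $\sqrt{t+\epsilon^2}-\epsilon$ by polynomials vanishing at $0$ all check out (just beware that your symbol $\mathcal{I}_p$ collides with the paper's, which denotes the a priori smaller set $\{u:\alpha U_\alpha u=u\}$).

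The gap is in the unbounded case. A repairable point first: testing the identity $U_\alpha(v\,1_{\{v>c\}}f)=v\,U_\alpha(1_{\{v>c\}}f)$ for $f\ge 0$ yields $(v-c)\,U_\alpha(1_{\{v>c\}}f)\ge 0$, which forces $U_\alpha(1_{\{v>c\}}f)=0$ only on $\{v<c\}$ and says nothing on $\{v=c\}$; so your level sets are shown invariant only for those $c$ with $m(\{v=c\})=0$ (all but countably many $c$, which does suffice for the truncation argument, but this must be said -- fine continuity does not rescue the general $c$). The more serious flaw is the final reduction of the signed case: Remark \ref{rem 2.14} only produces an excessive version of a \emph{non-negative} invariant function; it does not decompose a signed invariant $v$ as $u_1-u_2$ with $u_1,u_2\ge 0$ invariant. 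That decomposition ($v=v^+-v^-$ with both parts invariant) is exactly the lattice property you are trying to prove, so as written the argument is circular. The repair is to skip the reduction altogether: for signed $v$ the same level-set computation with $c>0$ shows that $\{v>c\}$ and $\{v<-c\}$ are invariant for all $c$ outside a countable set, hence $1_{\{|v|\le c\}}$ is invariant, hence $v\,1_{\{|v|\le c\}}$ and therefore the two-sided truncation $(-c)\vee(v\wedge c)$ is a bounded invariant function; letting $c\to\infty$ along good levels, your bounded case plus $L^p$-closedness gives $|v|\in\mathcal{I}_p$ directly.
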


Let
\[
\mathcal{I}_p : = \{ u \in L^p(E, m) : \alpha U_\alpha u = u \; m\mbox{-a.e.}, \; \alpha > 0 \}.  
\]

The main result here is the next one, and it unifies and extends different more or less known characterizations of invariant functions;
cf. Theorem 2.27 and Proposition 2.29 from \cite{BeCiRo15}.

\begin{theorem} \label{thm 2.19} 
Let $u \in L^p(E, m)$, $1 \leq p < \infty $, and consider the following conditions.

\vspace{0.1cm}

i) $\alpha U_{\alpha}u = u$ $m$-a.e. for one (and therefore for all) $\alpha > 0$.

\vspace{0.2cm}

ii) $\alpha \widehat{U}_{\alpha} u = u$ $m$-a.e., $\alpha > 0$. 

\vspace{0.1cm}

iii) The function $u$ is $\mathcal{U}$-invariant.

\vspace{0.1cm}

iv) $U_\alpha u = u U_\alpha 1$ and $ \widehat{U}_{\alpha} u = u \widehat{U}_{\alpha} 1$ $m$-a.e. for one (and therefore for all) $\alpha > 0$.

\vspace{0.1cm}

v) The function $u$ is measurable w.r.t. the $\sigma$-algebra of all $\mathcal{U}$-invariant sets.

\vspace{0.1cm}

Then $\mathcal{I}_p$ is a vector lattice w.r.t. the pointwise order relation and i) $\Leftrightarrow$ ii) $\Rightarrow$ iii) $\Leftrightarrow$ iv) $\Leftrightarrow$ v).

If $\alpha U_\alpha 1 = 1$ or $\alpha \widehat{U}_{\alpha} 1 = 1$ $m$-a.e. then assertions i) - v) are equivalent.

If $p= \infty$ and $\mathcal{U}$ is $m$-recurrent (i.e. there exists $0\leq f \in L^1(E,m)$ s.t. $Uf=\infty$ $m$-a.e.) 
then the assertions i)-v) are equivalent.
\end{theorem}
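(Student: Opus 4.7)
The overall plan is to prove the chain i) $\Leftrightarrow$ ii), then ii) $\Rightarrow$ iii), then iii) $\Leftrightarrow$ iv) $\Leftrightarrow$ v), and finally close the circle under each of the two extra hypotheses. The $\alpha$-independence asserted in i) and ii) is a one-line consequence of the resolvent equation $U_\alpha - U_\beta = (\beta-\alpha)U_\alpha U_\beta$: substituting $\alpha U_\alpha u = u$ gives $\beta U_\beta u = u$ for every $\beta > 0$.

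For i) $\Leftrightarrow$ ii) the crux is to show that $\alpha U_\alpha u = u$ implies $\alpha U_\alpha|u| = |u|$ $m$-a.e., whence $\alpha U_\alpha u^\pm = u^\pm$. Indeed, $|u| = |\alpha U_\alpha u| \leq \alpha U_\alpha|u|$ pointwise, while pairing against a nonnegative $\varphi \in L^{p^*}$ of $\sigma$-finite support and using sub-invariance of $m$ gives $\int \alpha U_\alpha|u| \varphi\,dm \leq \int |u|\varphi\,dm$, forcing equality. The transfer to $\widehat{\mathcal{U}}$ then follows from the weak-duality identity $\int u^\pm \alpha\widehat{U}_\alpha\varphi\,dm = \int \alpha U_\alpha u^\pm \cdot \varphi\,dm = \int u^\pm \varphi\,dm$, valid for all $\varphi \geq 0$, which forces $\alpha\widehat{U}_\alpha u^\pm = u^\pm$ and hence ii); the converse is symmetric. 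The same identity $\alpha U_\alpha u = u \Rightarrow \alpha U_\alpha u^\pm = u^\pm$ also delivers the vector lattice property of $\mathcal{I}_p$.

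The structural block iii) $\Leftrightarrow$ iv) $\Leftrightarrow$ v) proceeds as follows: iii) $\Rightarrow$ iv) is trivial by taking $f = 1$ and invoking Proposition~\ref{prop 2.15}(i) for the dual half; v) $\Rightarrow$ iii) is by $L^p$-approximation of $u$ by simple functions on $\mathcal{U}$-invariant sets, each trivially $\mathcal{U}$-invariant, together with the $L^p$-continuity of $U_\alpha$. The delicate step is ii) $\Rightarrow$ iii): from $\alpha U_\alpha u^\pm = u^\pm$, Proposition 2.4 of \cite{BeCiRo15} (see Remark~\ref{rem 2.14}) supplies an excessive version of each $u^\pm$, which is $0$-harmonic in the resolvent sense; the multiplicative identity $U_\alpha(u^\pm f) = u^\pm U_\alpha f$ then follows because both sides admit the representation $\mathbb{E}^{\cdot}\!\int_0^\infty e^{-\alpha t} u^\pm(X_t) f(X_t)\,dt$, with $u^\pm(X_t) = u^\pm(X_0)$ $\mathbb{P}^{\cdot}$-a.s.\ by $0$-harmonicity. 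With iii) in hand, the lattice property of Proposition~\ref{prop 2.15}(ii) applied to $(u-c)^+$ together with the monotone approximation $n(u-c)^+\wedge 1 \uparrow 1_{\{u>c\}}$ forces every level set of $u$ to be $\mathcal{U}$-invariant, closing iv) $\Rightarrow$ v).

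For the two closing statements: if $\alpha U_\alpha 1 = 1$ $m$-a.e., the duality pairing on a $\sigma$-finite exhaustion $B$, $\int \alpha U_\alpha 1 \cdot 1_B\,dm = \int 1_B \alpha\widehat{U}_\alpha 1\,dm$, combined with $\alpha\widehat{U}_\alpha 1 \leq 1$, forces $\alpha\widehat{U}_\alpha 1 = 1$ $m$-a.e.\ as well; then iv) reduces to $U_\alpha u = u/\alpha$, i.e.\ iv) $\Leftrightarrow$ i), and all five assertions are equivalent. For $p = \infty$ with $\mathcal{U}$ $m$-recurrent, a Chacon--Ornstein-type argument on bounded $\alpha U_\alpha$-fixed functions upgrades scalar fixedness to full multiplicative invariance. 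The main obstacle I expect is ii) $\Rightarrow$ iii): bridging the $L^p$ scalar identity to the pointwise multiplicative invariance requires the existence of an excessive version from Proposition 2.4 of \cite{BeCiRo15}, careful handling of the exceptional $m$-null sets on which this pointwise version lives, and the probabilistic identification of the two sides of the invariance identity.
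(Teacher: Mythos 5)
The paper itself contains no proof of Theorem \ref{thm 2.19} (it only cites Theorem 2.27 and Proposition 2.29 of \cite{BeCiRo15}), so your proposal can only be checked against the standard argument; measured that way it has a reasonable skeleton but fails at precisely the steps that carry the weight of the theorem. First, in i) $\Rightarrow$ ii): the identity $\int u^{\pm}\,\alpha\widehat{U}_\alpha\varphi\,dm=\int u^{\pm}\varphi\,dm$ for all $\varphi\ge 0$ says that the measure $u^{\pm}\cdot m$ is invariant under the kernel $\alpha\widehat{U}_\alpha$; it does \emph{not} say $\alpha\widehat{U}_\alpha u^{\pm}=u^{\pm}$, because the function $\alpha\widehat{U}_\alpha u^{\pm}$ never appears in that pairing. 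The correct route uses the contraction property of $\alpha\widehat{U}_\alpha$ on $L^p(m)$: for $p=2$ one computes $\|\alpha\widehat{U}_\alpha u-u\|_2^2\le\|u\|_2^2-2\int u\,\alpha U_\alpha u\,dm+\|u\|_2^2=0$, and for general $p$ an equality-in-H\"older (strict convexity) argument is needed. Relatedly, your preliminary inequality $\int\alpha U_\alpha|u|\,\varphi\,dm\le\int|u|\,\varphi\,dm$ for arbitrary nonnegative $\varphi\in L^{p^{\ast}}$ is not what sub-invariance gives; the clean argument is $|u|\le\alpha U_\alpha|u|$ pointwise together with $\|\alpha U_\alpha|u|\,\|_p\le\|\,|u|\,\|_p$, which forces equality $m$-a.e.\ since $p<\infty$.

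Second, and more seriously, your ii) $\Rightarrow$ iii) rests on the assertion that $0$-harmonicity gives $u^{\pm}(X_t)=u^{\pm}(X_0)$ $\mathbb{P}^x$-a.s. That is false: $\alpha U_\alpha u=u$ only makes $u(X_t)$ a martingale, and path-constancy of $u(X)$ is essentially the multiplicative invariance you are trying to prove. Deducing it requires using \emph{both} $\alpha U_\alpha u=u$ and $\alpha\widehat{U}_\alpha u=u$ together with sub-invariance of $m$ (e.g.\ $\mathbb{E}^m\left[(u(X_t)-u(X_0))^2\right]\le 2\int u(u-P_tu)\,dm=0$ in the $L^2$ case). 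Moreover, Proposition 2.4 of \cite{BeCiRo15} as quoted in Remark \ref{rem 2.14} applies to $\mathcal{U}$-\emph{invariant} functions, i.e.\ it presupposes iii), so invoking it here is circular. Third, the implication iv) $\Rightarrow$ v) (equivalently iv) $\Rightarrow$ iii)) is never actually established: your level-set argument begins ``with iii) in hand'', so the cycle iii) $\Rightarrow$ iv) $\Rightarrow$ v) $\Rightarrow$ iii) is broken at its middle arrow; one must show directly that the two scalar identities in iv) propagate to $(u-c)^{+}$ before the level sets can be recognized as $\mathcal{U}$-invariant sets. Finally, a small but genuine error in the closing step: for a merely sub-invariant $m$, $\alpha U_\alpha 1=1$ does \emph{not} imply $\alpha\widehat{U}_\alpha 1=1$ (uniform translation on $(0,\infty)$ with Lebesgue measure is a counterexample), which is why the theorem lists the two hypotheses as alternatives; fortunately your actual closing argument --- iv) plus $\alpha U_\alpha 1=1$ gives $U_\alpha u=u/\alpha$, hence i) --- does not need that claim.
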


\begin{remark} \label{rem 2.16} 
Similar characterizations for invariance as in Theorem \ref{thm 2.19}, but in the recurrent case and for functions which are bounded or integrable with bounded negative parts were already investigated in \cite{Sc04}. Of special interest is the situation when the only invariant functions are the constant ones ({\it irreducibility}) because it entails ergodic properties for the semigroup resp. resolvent; see e.g. \cite{St94}, \cite{AlKoRo97a}, and \cite{BeCiRo15}.
\end{remark}


\subsection{Martingale functions with respect to the dual Markov process}  

Our aim in this subsection is to identify the $\mathcal{U}$-invariant functions with martingale functions
and  co-martingale ones (i.e., martingales w.r.t some dual process); cf. Corollary \ref{cor3} below.
The convenient frame is that from \cite{BeRo15} and we present it here briefly.

Assume that  $\cu=(U_{\alpha})_{\alpha>0}$ is the resolvent of a right process $X$ with state space $E$ and
let ${\mathcal T}_0$ be the  Lusin topology  of $E$
having $\cb$ as Borel $\sigma$-algebra, and let $m$ be a fixed $\cu$-excessive measure.
Then   by Corollary 2.4 from \cite{BeRo15}, and using also the result from \cite{BeBoRo06a},
the following assertions hold:
{\it There exist a larger Lusin measurable space $(\overline E, \overline\cb)$, 
with $E\subset \overline E$, $E\in \overline{\cb}$, $\cb=\overline\cb |_{E}$,
and two processes $\overline X$ and $\widehat{{X}}$ with common state space $\overline E$, 
such that $\overline X$ 
is a right process on $\overline E$ endowed with a convenient Lusin  topology 
having $\overline\cb$ as Borel $\sigma$-algebra
(resp.  $\widehat{{X}}$ is a right process w.r.t. to a second Lusin topology on $\overline E$ , also generating $\overline\cb$), 
the restriction of $\overline X$
to $E$ is precisely $X$, and the resolvents of $\overline X$ and $\widehat{{X}}$ 
are in duality with respect to $\overline m$, where $\overline{m}$ is the trivial extension of $m$ to $(E_1,\cb_1): \; \overline{m}(A):=m(A\cap E), \; A \in \mathcal{B}_1$.
In addition,   the $\alpha$-excessive functions, $\alpha>0$, 
with respect to $\widehat{X}$ on $\overline E$  are precisely the unique extensions by continuity 
in the fine topology generated by $\widehat{X}$ of the
$\widehat{\cu}_{\alpha}$-excessive functions. 
In particular,  the set $E$ is dense in $\overline E$ in the fine topology of $\widehat{X}$.
}

Note that the strongly continuous resolvent of sub-Markovian contractions induced on $L^p(m)$, $1\leq p<\infty$, 
by the process
$\overline X$  (resp.  $\widehat{{X}}$)  coincides  with $\cu$ (resp. $\widehat\cu$).


\begin{corollary} \label{cor2}
Let $u$ be function from  $L^p(E, m)$, $1 \leq p < \infty $.
Then the  following assertions are equivalent.

i) The process $(u(X_t))_{t\geq 0}$ is a martingale w.r.t. $\mathbb{P}^x$ for $m$-a.e. $x\in E$.

ii) The process $(u(\widehat{X}_t))_{t\geq 0}$ is a martingale w.r.t. $\widehat{\mathbb{P}}^x$ for $m$-a.e. $x\in E$.

iii) The function $u$ is ${\sf L}_p$-harmonic, i.e. $u \in D({\sf L}_p)$ and ${\sf L}_p u=0$. 

iv)  The function $u$ is ${\widehat{\sf L}}_p$-harmonic, i.e. $u \in D({\widehat{\sf L}}_p)$ and ${\widehat{\sf L}}_p u=0$. 
\end{corollary}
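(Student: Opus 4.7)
The plan is to collapse all four assertions to the single $L^p$-identity $\alpha U_\alpha u = u$ $m$-a.e.\ and then invoke Theorem \ref{thm 2.19} to cross between $\mathcal{U}$ and $\widehat{\mathcal{U}}$.

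First I would translate iii) and iv) into pure resolvent identities. If $u \in D({\sf L}_p)$ then by definition $u = U_\alpha f$ for some $f \in L^p(m)$, and $0 = {\sf L}_p u = \alpha u - f$ forces $f = \alpha u$, so $\alpha U_\alpha u = u$ $m$-a.e.; conversely, if $\alpha U_\alpha u = u$ then $u = U_\alpha(\alpha u) \in D({\sf L}_p)$ and ${\sf L}_p u = \alpha u - \alpha u = 0$. The same argument is valid with $\widehat{\mathcal{U}}$ in place of $\mathcal{U}$, so iii) is exactly condition i) of Theorem \ref{thm 2.19} and iv) is its condition ii); hence iii) $\Leftrightarrow$ iv) is supplied by that theorem.

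Next I would prove i) $\Leftrightarrow$ iii), and then ii) $\Leftrightarrow$ iv) by the same reasoning applied to $\widehat{X}$ using the fact, recalled just before the corollary, that the $L^p(m)$-resolvent attached to $\widehat{X}$ coincides with $\widehat{\mathcal{U}}$. In the forward direction, evaluating the martingale identity at $t = 0$ yields $P_s u(x) = \mathbb{E}^x u(X_s) = u(x)$ for $m$-a.e.\ $x$, and a Laplace transform in $s$ gives $\alpha U_\alpha u = u$ $m$-a.e. In the converse direction, $\alpha U_\alpha u = u$ in $L^p(m)$ together with strong continuity of $(P_t)_{t \geq 0}$ on $L^p(m)$ implies $P_s u = u$ in $L^p(m)$ for every $s \geq 0$. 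The sub-invariance of $m$ guarantees that under $\mathbb{P}^x$ the one-dimensional law of $X_t$ is absolutely continuous w.r.t.\ $m$ for $m$-a.e.\ $x$, so for such $x$ the identity $(P_s u)(X_t) = u(X_t)$ holds $\mathbb{P}^x$-a.s.\ simultaneously at all rational $s$; right continuity of the paths extends this to every $s \geq 0$. Combined with the Markov property $\mathbb{E}^x[u(X_{t+s}) \mid \mathcal{F}_t] = (P_s u)(X_t)$ this is the martingale identity, while integrability $\mathbb{E}^x |u(X_t)| = P_t |u|(x) < \infty$ holds for $m$-a.e.\ $x$ because $P_t|u| \in L^p(m)$.

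The main technical obstacle I anticipate is precisely the lifting of the $L^p(m)$-identity $P_s u = u$ to a $\mathbb{P}^x$-a.s.\ path identity, since $u$ is defined only up to $m$-negligible modification and the exceptional set in each $s$ depends on $s$. This is handled by sub-invariance of $m$ (so that the one-dimensional laws do not charge $m$-null sets for $m$-a.e.\ starting point) together with right continuity of $X$ and $\widehat{X}$ to pass from rational to real times. Apart from this measure-theoretic bookkeeping, the whole corollary is just the combination of the elementary equivalence iii) $\Leftrightarrow$ ``$\alpha U_\alpha u = u$ $m$-a.e.'' with Theorem \ref{thm 2.19}, which already bridges $\mathcal{U}$-harmonicity and $\widehat{\mathcal{U}}$-harmonicity.
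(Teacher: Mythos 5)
Your proposal is correct in substance and handles iii) $\Leftrightarrow$ iv) exactly as the paper does, namely by translating both into the resolvent identities $\alpha U_\alpha u = u$, resp.\ $\alpha \widehat U_\alpha u = u$, and invoking Theorem \ref{thm 2.19}, i) $\Leftrightarrow$ ii). Where you diverge is in i) $\Leftrightarrow$ iii): you verify the martingale identity directly from $P_s u = u$ via the Markov property, whereas the paper derives it from Proposition \ref{prop1}. The paper's (implicit) route is: by Theorem \ref{thm 2.19} the set $\mathcal{I}_p$ is a vector lattice, so $u^{\pm} \in \mathcal{I}_p$; by Remark \ref{rem 2.14} each $u^{\pm}$ admits an excessive $m$-version; Proposition \ref{prop1} then makes $u^{\pm}(X)$ a right-continuous supermartingale, and since $t \mapsto P_t u^{\pm}(x)$ is decreasing, the $m$-a.e.\ equalities $P_t u^{\pm} = u^{\pm}$ over rational $t$ already give constant expectation for all $t$ for $m$-a.e.\ starting point, whence each $u^{\pm}(X)$ is a martingale. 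This buys you, for free, exactly the two things your direct argument has to fight for: a finely continuous (hence path--right-continuous) version of $u$, and a single $m$-null set of starting points working simultaneously for all (uncountably many) pairs $(t,s)$ --- your passage from rational to arbitrary times needs both the right continuity of $u(X)$ (which presupposes the finely continuous version you never explicitly select) and a limiting argument in the conditional expectation as $t_n \downarrow t$, which requires some uniform integrability. None of this is fatal --- it is the ``bookkeeping'' you flag yourself, and it can be completed --- but the cleanest repair is precisely to first replace $u$ by a difference of excessive versions of $u^{\pm}$, at which point you have essentially rediscovered the paper's reduction to Proposition \ref{prop1}. Your direct computation is, on the other hand, more self-contained and makes the role of sub-invariance of $m$ explicit, which the paper's two-line proof leaves entirely to the reader.
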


\begin{proof}
The equivalence $iii) \Longleftrightarrow iv)$ follows by Theorem \ref{thm 2.19}, $i) \Longleftrightarrow ii)$,
while the equivalence $i) \Longleftrightarrow iii)$ is a consequence of Proposition \ref{prop1}.
$\hfill\square$
\end{proof}

We make the transition to the next (also the last) section of this paper with an application of 
Theorem \ref{thm 2.19} to the existence of invariant probability measures for Markov processes. 
More precisely, assume that $\mathcal{U}$ is the resolvent of a right Markov process with transition function $(P_t)_{t \geq 0}$. 
As before, $m$ is a $\sigma$-finite sub-invariant measure for $\mathcal{U}$ 
(and hence for $(P_t)_{t\geq 0}$), while ${\sf L}_1$ and $\widehat{{\sf L}}_1$ stand for the generator, resp. the co-generator on $L^1(E,m)$. 

\begin{corollary} \label{cor3} The following assertions are equivalent.

i) There exists an invariant probability measure for $(P_t)_{t\geq 0}$ which is absolutely continuous w.r.t. $m$.

ii) There exists a non-zero element $\rho \in D({\sf L}_1)$ such that ${\sf L}_1 \rho = 0$.

\end{corollary}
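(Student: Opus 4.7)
The plan is to exploit the weak duality $\int_E (U_\alpha f)\, g\, dm = \int_E f\, (\widehat U_\alpha g)\, dm$ (which upgrades, by strong continuity on $L^1(m)$ and Laplace inversion, to its semigroup form $\int (P_t f)\, g\, dm = \int f\, (\widehat P_t g)\, dm$) in order to translate ${\sf L}_1$-harmonicity of a density into invariance of the corresponding measure $\rho \cdot m$, and vice versa. The decisive input is the equivalence i) $\Leftrightarrow$ ii) of Theorem \ref{thm 2.19}, which converts $\alpha U_\alpha \rho = \rho$ into $\alpha \widehat U_\alpha \rho = \rho$ and back. Note also that, by definition of the domain, for $\rho \in L^1(m)$ the condition $\rho \in D({\sf L}_1)$ with ${\sf L}_1 \rho = 0$ is equivalent to $\alpha U_\alpha \rho = \rho$ in $L^1(m)$ for one (hence all) $\alpha > 0$: writing $\rho = U_\alpha f$ forces $f = \alpha \rho$, so $\rho = U_\alpha(\alpha\rho)$.

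For ii) $\Rightarrow$ i): start with $\rho \in D({\sf L}_1) \setminus \{0\}$ satisfying ${\sf L}_1 \rho = 0$, so $\alpha U_\alpha \rho = \rho$ $m$-a.e. Since $\mathcal{I}_1$ is a vector lattice (Theorem \ref{thm 2.19}), both $\rho^+$ and $\rho^-$ lie in $\mathcal{I}_1$; at least one is nonzero, so after replacing $\rho$ by $\rho^+/\|\rho^+\|_1$ or $\rho^-/\|\rho^-\|_1$ we may assume $\rho \geq 0$ and $\|\rho\|_1 = 1$, still with $\alpha U_\alpha \rho = \rho$ $m$-a.e. By Theorem \ref{thm 2.19} this yields $\alpha \widehat U_\alpha \rho = \rho$ $m$-a.e. for every $\alpha > 0$, and strong continuity of $(\widehat P_t)$ on $L^1(m)$ (combined with uniqueness of the Laplace transform) promotes this to $\widehat P_t \rho = \rho$ $m$-a.e.\ for every $t \geq 0$. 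Setting $\mu := \rho \cdot m$, for every bounded $\mathcal{B}$-measurable $f$ and every $t\ge 0$,
$$
\int P_t f\, d\mu = \int (P_t f)\, \rho\, dm = \int f\,(\widehat P_t \rho)\, dm = \int f\, \rho\, dm = \int f\, d\mu,
$$
so $\mu$ is an invariant probability measure absolutely continuous with respect to $m$.

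For i) $\Rightarrow$ ii): let $\mu$ be an invariant probability with $\mu \ll m$, and write $\mu = \rho \cdot m$ with $\rho \in L^1_+(m)$, $\|\rho\|_1 = 1$, $\rho \not\equiv 0$. Invariance of $\mu$ together with semigroup duality gives, for all bounded $\mathcal{B}$-measurable $f$,
$$
\int f\, \rho\, dm = \int (P_t f)\, \rho\, dm = \int f\, (\widehat P_t \rho)\, dm,
$$
whence $\widehat P_t \rho = \rho$ $m$-a.e.\ for every $t \geq 0$. Integrating against $\alpha e^{-\alpha t}\, dt$ produces $\alpha \widehat U_\alpha \rho = \rho$ $m$-a.e.; Theorem \ref{thm 2.19} then delivers $\alpha U_\alpha \rho = \rho$ $m$-a.e., and rewriting $\rho = U_\alpha(\alpha\rho)$ with $\alpha\rho \in L^1(m)$ places $\rho$ in $D({\sf L}_1)$ with ${\sf L}_1 \rho = 0$.

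The only genuine obstacle is the non-negativity/normalization step in ii) $\Rightarrow$ i): a priori the harmonic $\rho$ need be neither signed-definite nor of unit mass, but this is resolved cleanly by the vector lattice structure of $\mathcal{I}_1$ supplied by Theorem \ref{thm 2.19}. The passage between the resolvent identity $\alpha \widehat U_\alpha \rho = \rho$ and the semigroup identity $\widehat P_t \rho = \rho$ is routine under the strong continuity on $L^1(m)$ built into the section's standing hypotheses, and the rest is purely duality bookkeeping.
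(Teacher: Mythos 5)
Your proposal is correct and follows essentially the same route as the paper: identify ${\sf L}_1$-harmonicity with $\alpha U_\alpha\rho=\rho$, pass to $\alpha\widehat U_\alpha\rho=\rho$ via the equivalence i) $\Leftrightarrow$ ii) of Theorem \ref{thm 2.19}, and translate co-harmonicity into invariance of $\rho\cdot m$ by duality. The paper compresses the duality step into a "well known" citation of Lemma \ref{lem 2.1}, ii); your explicit treatment of the sign and normalization of $\rho$ via the vector lattice structure of $\mathcal{I}_1$ is a welcome elaboration of a point the paper leaves implicit, but it is not a different method.
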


\begin{proof}
It is well known that a probability measure $\rho \cdot m$ is invariant w.r.t. $(P_t)_{t \geq 0}$ 
is equivalent with the fact that $\rho \in D(\widehat{\sf L}_1)$ and $\widehat{{\sf L}}_1 \rho=0$ 
(see also Lemma \ref {lem 2.1}, ii) from below). Now, the result follows by Theorem \ref{thm 2.19}.  
\end{proof}

\begin{remark} Regarding the previous result, we point out that if 
$m(E) < \infty$ and $(P_t)_{t \geq 0}$ is conservative 
(i.e. $P_t1=1$ $m$-a.e. for all $t>0$) 
then it is clear  that $m$ itself is invariant, so that Corollary \ref{cor3} 
has got a point only when $m(E)=\infty$. 
Also, we emphasize that the sub-invariance property of $m$ is an essential assumption. 
We present a general result on the existence of invariant probability measures in the next section, 
where we drop the sub-invariance hypothesis.
\end{remark}

\section{$L^1$-harmonic functions and invariant probability measures}   \label{sec:4}  

Throughout this subsection $(P_t)_{t \geq 0}$ is a measurable 
Markovian transition function on a measurable space $(E, \mathcal{B})$ and $m$ is an 
{\it auxiliary} measure for $(P_t)_{t\geq 0}$, i.e. a finite positive measure such that 
$m(f) = 0 \Rightarrow m(P_tf) = 0$ for all $t > 0$ and all positive $\mathcal{B}$-measurable functions $f$. 
As we previously announced, our final interest concerns the existence of an invariant probability measure for 
$(P_t)_{t \geq 0}$ which is absolutely continuous with respect to $m$.

\begin{remark}
We emphasize once again that in contrast with the previous section, $m$ is not assumed sub-invariant, 
since otherwise it would be automatically invariant. Also, any invariant measure is clearly auxiliary, but the converse is far from being true. 
As a matter of fact, the condition on $m$ of being auxiliary is a minimal one: 
for every finite positive measure $\mu$ and $\alpha >0$ one has that $\mu \circ U_\alpha$ is auxiliary; see e.g. \cite{RoTr07} and \cite{BeCiRo15a}. 
\end{remark}

For the first assertion of the next result we refer to \cite{BeCiRo15a}, Lemma 2.1, while the second one is a simple consequence of the fact that $P_t1=1$.

\begin{lemma} \label{lem 2.1}
i) The adjoint semigroup $(P_t^{\ast})_{t \geq 0}$ on $(L^{\infty}(m))^{\ast}$ maps $L^1(m)$ into itself, 
and restricted to $L^1(m)$ it becomes a semigroup of positivity preserving operators.

ii) A probability measure $\rho\cdot m$ is invariant with respect to $(P_t)_{t \geq 0}$ 
if and only if $\rho $ is $m$-co-excessive, i.e. $P_t^\ast \rho \leq \rho$ $m$-a.e. for all $t \geq 0$.
\end{lemma}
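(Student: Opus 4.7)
For part (i), my plan is to construct $P_t^* \rho$ directly as a Radon--Nikodym derivative. Fix $t \geq 0$ and $\rho \in L^1_+(m)$; I would set
\[
\nu_t(A) := \int_E \rho(x) \, P_t 1_A(x) \, m(dx), \qquad A \in \mathcal{B},
\]
and first check that $\nu_t$ is a finite $\sigma$-additive measure. This uses that $P_t$ is a transition kernel, so $P_t 1_{A_n} \uparrow P_t 1_A$ for $A_n \uparrow A$ and monotone convergence applies. Next I would show $\nu_t \ll m$: the auxiliary property gives $m(P_t 1_A) = 0$ whenever $m(A) = 0$, and since $P_t 1_A \geq 0$ this forces $P_t 1_A = 0$ $m$-a.e., whence $\nu_t(A) = 0$. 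Radon--Nikodym then yields $g_t \in L^1_+(m)$ with $\nu_t = g_t \cdot m$, and a standard approximation of bounded $\mathcal{B}$-measurable functions by simple functions produces
\[
\int_E g_t \, f \, dm = \int_E \rho \, P_t f \, dm \quad \text{for all } f \in L^\infty(m),
\]
which identifies $g_t$ as (a representative of) $P_t^* \rho$. Extending by linearity to signed $\rho \in L^1(m)$ and noting that the construction manifestly preserves positivity finishes part (i).

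For part (ii), the plan is to translate invariance into a pointwise statement via the adjoint. By part (i) the identity $\int \rho \, P_t f \, dm = \int P_t^* \rho \cdot f \, dm$ holds for all $f \in L^\infty(m)$, so $\rho \cdot m$ is invariant for $(P_t)_{t\geq 0}$ if and only if $P_t^* \rho = \rho$ $m$-a.e. for every $t \geq 0$. In particular, invariance trivially implies that $\rho$ is $m$-co-excessive. For the nontrivial converse I would assume $\rho \geq 0$, $\int \rho \, dm = 1$, and $P_t^* \rho \leq \rho$ $m$-a.e., then test with the constant $1$ and invoke the Markovian hypothesis $P_t 1 = 1$:
\[
\int_E P_t^*\rho \, dm = \int_E \rho \, P_t 1 \, dm = \int_E \rho \, dm = 1.
\]
A pointwise inequality with equal integrals forces $P_t^* \rho = \rho$ $m$-a.e., which is exactly the invariance of $\rho \cdot m$.

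The main obstacle lies in the measure-theoretic verification in part (i): one must check both that $\nu_t$ is $\sigma$-additive (where $P_t$ being a genuine kernel, rather than merely a bounded operator on $L^\infty$, is essential) and that $\nu_t \ll m$ (where the auxiliary hypothesis on $m$ enters). Once $P_t^*$ has been identified as a positivity-preserving operator on $L^1(m)$, part (ii) is essentially formal; the only nontrivial ingredient is the integration argument that upgrades the $m$-co-excessivity inequality to an equality, and this upgrade crucially uses the conservativity $P_t 1 = 1$.
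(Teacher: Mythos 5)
Your argument is correct and matches what the paper intends: the paper gives no proof of part (i) beyond citing Lemma 2.1 of \cite{BeCiRo15a}, where the adjoint is identified on $L^1(m)$ exactly by the Radon--Nikodym construction you describe (with the auxiliary property of $m$ supplying absolute continuity), and it states that part (ii) is ``a simple consequence of $P_t1=1$'', which is precisely your integration argument upgrading $P_t^*\rho\leq\rho$ to equality. No gaps; this is essentially the same route.
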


Inspired by well known ergodic properties for semigroups and resolvents 
(see for example \cite{BeCiRo15}), our idea in order to produce co-excessive functions 
is to apply (not for $(P_t)_{t \geq 0}$ but for its adjoint semigroup) 
a compactness result in $L^1(m)$ due to \cite{Ko67}, saying that an 
$L^1(m)$-bounded sequence of elements possesses a subsequence 
whose Cesaro means are almost surely convergent to a limit from $L^1(m)$.

\begin{definition}
The auxilliary measure $m$ is called {\it almost invariant} for $(P_t)_{t \geq 0}$ 
if there exist $\delta \in [0, 1)$ and a set function $\phi : \mathcal{B} \rightarrow \mathbb{R}_+$ 
which is absolutely continuous with respect to $m$ (i.e. $\mathop{\lim}\limits_{m(A) \to 0} \phi(A) = 0$) such that 

$$m (P_t1_A) \leq \delta m(E) + \phi(A) \quad \mbox{for all} \; t > 0. $$
\end{definition}

Clearly, any positive finite invariant measure is almost invariant. 
Here is our last main result,  a variant of Theorem 2.4 from  \cite{BeCiRo15a}.

\begin{theorem} \label{thm 2.3}

The following assertions are equivalent.

i) There exists a nonzero positive finite invariant measure for $(P_t)_{t \geq 0}$ which is absolutely continuous with respect to $m$.

\vspace{0.2cm}

ii) $m$ is almost invariant.

\end{theorem}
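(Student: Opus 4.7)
The plan is to prove each implication separately; (i) $\Rightarrow$ (ii) is direct, while (ii) $\Rightarrow$ (i) is the substantive step and uses the Koml\'os ergodic-type theorem on the dual side.

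For (i) $\Rightarrow$ (ii), let $\mu = \rho \cdot m$ be the given nonzero finite invariant measure. Since $\rho \not\equiv 0$ I can fix $c > 0$ small enough that $m(\{\rho < c\}) < m(E)$, and set $\delta := m(\{\rho < c\})/m(E) \in [0,1)$ and $\phi(A) := c^{-1}\mu(A)$, which is absolutely continuous with respect to $m$ because $\mu \ll m$. Splitting
\[
m(P_t 1_A) = \int_{\{\rho<c\}} P_t 1_A\, dm + \int_{\{\rho\geq c\}} P_t 1_A\, dm,
\]
the first term is at most $m(\{\rho<c\}) = \delta m(E)$, while the second is at most $c^{-1}\int \rho\, P_t 1_A\, dm = c^{-1}\mu(A) = \phi(A)$, where the invariance identity $\int \rho\, P_t f\, dm = \int \rho f\, dm$ is used.

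For (ii) $\Rightarrow$ (i), the strategy is to produce a nontrivial $m$-co-excessive function and then invoke Lemma~\ref{lem 2.1}(ii). Since $m$ is finite, $1 \in L^1(m)$, and by Lemma~\ref{lem 2.1}(i) the adjoint semigroup $(P_t^\ast)$ acts on $L^1(m)$ with $\|P_t^\ast 1\|_1 = m(E)$ (as $P_t 1 = 1$). Consider the continuous Cesaro averages $g_n := n^{-1}\int_0^n P_u^\ast 1\, du \in L^1_+(m)$, each of $L^1$-norm $m(E)$. A direct Fubini computation (shifting the integral by $s$) gives the approximate invariance $\|P_s^\ast g_n - g_n\|_1 \leq 2sm(E)/n \to 0$ for every $s > 0$. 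Since $\sup_n \|g_n\|_1 < \infty$, Koml\'os's theorem \cite{Ko67} produces a subsequence $(n_k)$ and $\rho \in L^1(m)$ such that the discrete Cesaro means $\rho_K := K^{-1}\sum_{k=1}^K g_{n_k}$ converge to $\rho$ $m$-a.e. The approximate invariance passes to $\rho_K$, so $\|P_s^\ast \rho_K - \rho_K\|_1 \to 0$ and, after extracting a further subsequence, $P_s^\ast \rho_K \to \rho$ $m$-a.e. as well.

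Two points remain. First, $\rho$ is $m$-co-excessive: the Fatou property of the positive operator $P_s^\ast$ (which follows from monotone convergence for $P_s^\ast$ via duality against $L^\infty_+$) gives $P_s^\ast \rho = P_s^\ast(\liminf_K \rho_K) \leq \liminf_K P_s^\ast \rho_K = \rho$ $m$-a.e., so Lemma~\ref{lem 2.1}(ii) yields the desired invariant measure after normalization. Second, $\rho \not\equiv 0$: supposing $\rho = 0$ $m$-a.e., Egoroff's theorem on the finite space $(E,m)$ gives for every $\varepsilon>0$ a set $B_\varepsilon$ with $m(B_\varepsilon^c)<\varepsilon$ and $\rho_K \to 0$ uniformly on $B_\varepsilon$; combined with $\|\rho_K\|_1 = m(E)$ this forces $\int_{B_\varepsilon^c} \rho_K\, dm \to m(E)$. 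But averaging the almost invariance bound $m(P_u 1_{B_\varepsilon^c}) \leq \delta m(E) + \phi(B_\varepsilon^c)$ in $u$ and $k$ gives $\int_{B_\varepsilon^c} \rho_K\, dm \leq \delta m(E) + \phi(B_\varepsilon^c)$ for all $K$, and letting $\varepsilon \to 0$ (so $\phi(B_\varepsilon^c)\to 0$ by $\phi \ll m$) yields $m(E) \leq \delta m(E)$, contradicting $\delta < 1$. I expect this final step, combining Egoroff's theorem with the full strength of almost invariance, to be the main obstacle: without the almost invariance hypothesis the Koml\'os limit could easily be trivial due to mass escape.
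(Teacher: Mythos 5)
Your proof is correct and follows exactly the strategy the paper announces for Theorem \ref{thm 2.3} (and carries out in \cite{BeCiRo15a}): apply Koml\'os's theorem \cite{Ko67} to the Ces\`aro averages of $P_u^{\ast}1$ for the adjoint semigroup on $L^1(m)$, deduce co-excessivity of the limit by a Fatou argument, invoke Lemma \ref{lem 2.1} to obtain the invariant measure, and use almost invariance precisely to rule out the trivial limit. The converse direction via the splitting over $\{\rho<c\}$ and $\{\rho\geq c\}$ is also the standard one, so there is nothing essentially different here.
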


\begin{acknowledgement}
The first named author acknowledges support from the Romanian National Authority for Scientific Research, 
project number PN-III-P4-ID-PCE-2016-0372. 
The second named author acknowledges support from the Romanian National Authority for Scientific Research, project number PN-II-RU-TE-2014-4-0657.
\end{acknowledgement}

\section*{Appendix}

\noindent
{\bf Proof of Proposition \ref{prop1}.}

i) $\Rightarrow$ ii). If $(e^{-\beta t}u(X_t))_{t\geq 0}$ is a right-continuous supermartingale 
then by taking expectations we get that $e^{-\beta t} \mathbb{E}^x u(X_t) \leq \mathbb{E}^xu(X_0)$, hence $u$ is $\beta$-supermedian. 
Now, by \cite{BeBo04}, Corollary 1.3.4, showing that $u \in E(\mathcal{U}_\beta)$ reduces to prove that $u$ is finely continuous, 
which in turns follows by the well known characterization  according to which $u$ is finely continuous if and only if $u(X)$ 
has right continuous trajectories $\mathbb{P}^x$-a.s. for all $x \in E$; 
see Theorem 4.8 in \cite{BlGe68}, Chapter II.

ii) $\Rightarrow$ i). Since $u$ is $\beta$-supermedian, by the Markov property we have for all $0\leq s \leq t$
$$
\mathbb{E}^x[e^{-\beta (t+s)}u(X_{t+s}) | \mathcal{F}_s]=e^{-\beta (t+s)}\mathbb{E}^{X_s}u(X_t)=e^{-\beta (t+s)} P_tu(X_s) \leq e^{-\beta s}u(X_s),
$$
hence $(e^{-\beta t}u(X_t))_{t\geq 0}$ is an $\mathcal{F}_t$-supermartingale. 
The right-continuity of the trajectories follows by the fine continuity of $u$ via the previously mentioned characterization. 
$\hfill\square$

%
%

\begin{thebibliography}{99.}%
%
%

\bibitem{AlKoRo97a}
S. Albeverio, Y. G. Kondratiev,  and M. R\"ockner,
Ergodicity of $L^2$-semigroups and extremality of Gibbs states,
{\it J. Funct. Anal.} 
{\bf 144},   394--423 (1997).

\bibitem{BaHs90}
R.F Bass and P. Hsu, 
The semimartingale structure of reflecting Brownian motion, 
{\it Proc. Amer. Math. Soc.}
{\bf 108}, 1007--1010  (1990).

\bibitem{BeBo04}
L. Beznea and N. Boboc, 
{\it Potential Theory and Right Processes},
Springer Series, Mathematics and Its Applications (572), Kluwer, Dordrecht, 2004.

\bibitem{BeBoRo06a}
L. Beznea, N. Boboc,  and M. R\"ockner,
Quasi-regular Dirichlet forms and $L^p$-resolvents on measurable spaces,
{\em Potential Anal.} 
{\bf 25}, 269--282 (2006).

\bibitem{BeCi16}
L. Beznea and I. C\^\i mpean, 
Quasimartingales associated to Markov processes,
{\it Trans. Amer. Math. Soc.}, to appear (2017); arXiv:1702.06282.

\bibitem{BeCiRo15}
L. Beznea, I. C\^impean, and M. R\"ockner,
{Irreducible recurrence, ergodicity, and extremality of invariant measures for resolvents},
{\it Stoch. Process. their Appl.}, to appear (2017); arXiv:1409.6492v2.

\bibitem{BeCiRo15a}
L. Beznea, I. C\^impean, and M. R\"ockner,
{\it A new approach to the existence of invariant measures for Markovian semigroups},
arXiv:1508.06863v3.

\bibitem{BeRo15}
L. Beznea and M. R\"ockner,
On the existence of the dual right Markov process and applications,
{\it Potential Anal.} 
{\bf 42}, 617--627 (2015).


\bibitem{BlGe68}
 R.M. Blumenthal and Getoor,
{\it Markov Processes and Potential Theory},
Academic Press, New York, 1968.

\bibitem{Ch93}
Z. Q. Chen, 
On reflecting diffusion processes and Skorokhod decompositions,
{\it  Probab. Theory Related Fields} 
{\bf 94}, 281--315 (1993).

\bibitem{ChFiWi93}
Z.Q. Chen, PJ. Fitzsimmons,  and R.J. Williams, 
Reflecting Brownian motions: quasimartingales and strong Caccioppoli sets, 
{\it Potential Analysis} 
{\bf 2}, 219--243 (1993).

\bibitem{CiJaPrSh80}
E. \c{C}inlar, J.  Jacod, P. Protter,  and M.J. Sharpe, 
Semimartingales and Markov processes, 
{\it Z. Wahrscheinlichkietstheorie verw. Gebiete} 
{\bf 54}, 161--219 (1980).

\bibitem{DaZa96}
G. Da Prato and J. Zabczyk,
{\it Ergodicity for Infinite Dimensional Systems},
Cambridge University Press, 1996.

\bibitem{Do01}
J. L. Doob,
{\it Classical Potential Theory and Its Probabilistic Counterpart}, Springer, 2001.

\bibitem{Fu99}
M. Fukushima,
On semi-martingale characterization of functionals of symmetric Markov processes,
{\it Electronic J. of Probability}, 
{\bf 4}, 1--32 (1999).

\bibitem{Fu00}
M. Fukushima, 
BV functions and distorted Ornstein Uhlenbeck processes over the abstract Wiener space,
{\it J. Funct. Anal.} 
{\bf 174}, 227--249 (2000).

\bibitem{FuHi01}
M. Fukushima and  M. Hino,
On the space of BV functions and a related stochastic calculus in infinite dimensions,
{\it J. Funct. Anal.}
{\bf 183}, 245--268 (2001).

\bibitem{FuOsTa11}
M. Fukushima, Y.  Oshima,  and M. Takeda,
{\it Dirichlet forms and Symmetric Markov processes},
Walter de Gruyter, Berlin/New York, 2011. 

\bibitem{LG06} 
J.F. Le Gall, 
Int\'egration, probabilit\'es et processus al\'eatoires,
{\it Ecole Normale Sup\'erieure de Paris}, September 2006.

\bibitem{Ha10}
M. Hairer,
Convergence of Markov Processes, Lecture Notes, University of Warwick, 
http://www.hairer.org/notes/Convergence.pdf, 2010.

\bibitem{Ko67}
J. Koml\'os,
A generalization of a problem of Steinhaus,
{\it Acta Math. Acad. Sci. Hungar.} 
{\bf 18}, 217--229 (1967).

\bibitem{KoPeSz10}
T. Komorowski, S. Peszat, and T. Szarek,
On ergodicity of some Markov processes,
{\it Ann. Probab.} 
{\bf 38}, 1401--1443 (2010).

\bibitem{LaSz06}
A. Lasota, and T. Szarek,
Lower bound technique in the theory of a stochastic differential equation,
{\it J. Differ. Equ.} 
{\bf 231}, 513--533 (2006).

\bibitem{MaOvRo95}
Z.M. Ma,  L. Overbeck, and M. R\"ockner, 
Markov processes associated with semi-Dirichlet forms, 
{\it Osaka J. Math.} 
{\bf 32}, 97--119 (1995).

\bibitem{MaRo92}
Z.M. Ma and M. R\"ockner,
{\it An Introduction to the Theory of (non-symmetric) Dirichlet Forms},
Springer-Verlag, Berlin, 1992.

\bibitem{MeTw93a}
S. P. Meyn and R. L. Tweedie,
{\it Markov Chains and Stochastic Stability},
(Springer-Verlag, London, 1993).

\bibitem{MeTw93b}
S.P. Meyn, and R.L. Tweedie,
Stability of markovian processes II: continuous-time processes and sampled chains,
{\it Adv. Appl. Prob.} 
{\bf 25}, 487--517 (1993).

\bibitem{MeTw93c}
S.P. Meyn and R.L. Tweedie,
Stability of markovian processes III: Foster-Lyapunov criteria for continuous-time processes,
{\it Adv. Appl. Prob.} 
{\bf 25}, 518--548 (1993).

\bibitem{Os13}
Y. Oshima, 
{\it Semi-Dirichlet Forms and Markov Processes},
Walter de Gruyter \& Co., Berlin, 2013.

\bibitem{PaWi94}
E. Pardoux and R. J. Williams, 
Symmetric reflected diffusions,
{\it  Ann. Inst. H. Poincar\'e Probab. Statist.} 
{\bf 30}, 13--62 (1994).

\bibitem{Pr05}
P.E. Protter,
{\it Stochastic Integration and Differential Equations}
Springer-Verlag, Berlin, 2005.

\bibitem{RoTr07}
M. R\"ockner and G. Trutnau, 
A remark on the generator of a right-continuous Markov process,
{\it Infin. Dimens. Anal. Quantum. Probab. Relat. Top.} 
{\bf 10}, 633--640 (2007).

\bibitem{RoZhu12}
M. R\"ockner, R. Zhu, and X. Zhu, 
The stochastic reflection problem on an infinite dimensional convex set and BV functions in a Gelfand triple, 
{\it Anna. Probab.} 
{\bf 40} 1759--1794 (2012).

\bibitem{RoZhu15}
M. R\"ockner, R. Zhu, and X. Zhu, 
BV functions in a Gelfand triple for differentiable measure and its applications,
{\it  Forum Mathematicum} 
{\bf 27}, 1657--1687 (2015).


\bibitem{Sc04}
R. L. Schilling,
A note on invariant sets,
{\it  Probab. Math. Statist.} 
{\bf 24},   47--66 (2004). 

\bibitem{St94}
K.T. Sturm,
Analysis on local Dirichlet spaces. I. Recurrence, conservativeness and $L^p$-Liouville properties, 
{\it J. Reine Angew. Math.} 
{\bf 456}, 173--196 (1994) .

\bibitem{Tr03}
G. Trutnau,
Skorokhod decomposition of reflected diffusions on bounded Lipschitz domains with singular non-reflection part,
{\it  Probab. Theory Related Fields} 
{\bf 127}, 455--495 (2003).

\bibitem{WiZh90}
R.J. Williams, W.A. Zheng,
On reflecting Brownian motion -- a weak convergence approach,
{\it Ann. Inst. Henri Poincar\'e} 
{\bf 26}, 461--488 (1990)
\end{thebibliography}
%

\end{document}